\documentclass[letterpaper, 10pt, conference]{ieeeconf} 
\usepackage[noadjust]{cite}
\usepackage{amsmath,amssymb,amsfonts,mathrsfs}
\usepackage{algorithmic}
\usepackage{graphicx}
\usepackage{textcomp}
\usepackage{siunitx}
\def\BibTeX{{\rm B\kern-.05em{\sc i\kern-.025em b}\kern-.08em
    T\kern-.1667em\lower.7ex\hbox{E}\kern-.125emX}}

\usepackage[IEEE]{altthm-styles}
\usepackage{algorithmic-styles}
    
\newcommand\prob[2][]{\operatorname P_{#1}\{#2\}}

\newcommand\rdvar[2][\Omega]{#2^{#1}}

\newenvironment{alignp}{\align}{\endalign}

\IEEEoverridecommandlockouts


\begin{document}

\title{Probabilistic Region-of-Attraction Estimation with Scenario Optimization and Converse Theorems}
\author{Torbjørn Cunis
\thanks{This research was supported through the Land Baden-Württemberg.}
\thanks{The author is with the University of Stuttgart, 70569 Stuttgart, Germany. (e-mail: {\tt torbjoern.cunis@ifr.uni-stuttgart.de}).}
}

\maketitle

\begin{abstract}
The region of attraction characterizes well-behaved and safe operation of a nonlinear system and is hence sought after for verification.
In this paper, a framework for probabilistic region of attraction estimation is developed that combines scenario optimization and converse theorems. With this approach, the probability of an unstable condition being included in the estimate is independent of the system's complexity, while convergence in probability to the true region of attraction is proven.
Numerical examples demonstrate the effectiveness for optimization-based control applications. Combining systems theory and sampling, the complexity of Monte--Carlo-based verification techniques can be reduced.
The results can be extended to arbitrary level sets of which the defining function can be sampled, such as finite-horizon viability.
Thus, the proposed approach is applicable and/or adaptable to verification of a wide range of safety-related properties for nonlinear systems including feedback laws based on optimization or learning. 
\end{abstract}


\section{Introduction}
{Estimating} the region of attraction 
is a classical problem in the analysis of nonlinear dynamic systems \cite{genesioEtAl1985}. 
The region of attraction of a dynamic system is the set of all initial conditions such that system states asymptotically converge to a given equilibrium, thus describing admissible excitations. For closed-loop nonlinear control systems, the region of attraction is typically bounded due to inaccuracies of the underlying models or physical limitations of the controls. By assessing that the region of attraction is sufficiently large, stable asymptotic behaviour of a nonlinear system can be verified for the envisaged operating envelope.
Recently, region-of-attraction estimation has seen increased interest in the context of feasibility of model-predictive control \cite{Cunis2021aut, Skibik2022tac}, stability of time-distributed optimization \cite{Leung2021b}, and verification of neural networks \cite{Hashemi2020, Yin2021a, Fazlyab2020}.
For the purpose of verification, the region-of-attraction estimate should be nonconservative without overapproximating the true set.

The estimation is often cast as the problem to find a Lyapunov functions subject to a dissipation inequality but analytical solutions (such as sum-of-squares optimization \cite{Hashemi2020, Yin2021a, Fazlyab2020, topcuEtAl2008, chakrabortyEtAl2011, yinEtAl2019}) often require an algebraic (polynomial) approximation of the system dynamics. Furthermore, the complexity of sum-of-squares problems notoriously increases with the number of states considered, a particular issue for methods based on optimization and machine learning with numerous controller states.
An alternative are data-driven methods exploiting converse Lyapunov theorems \cite{Colbert2018, Lai2021scitech}, which approximate a Lyapunov function based on sampled, stable trajectories. These approaches face some major challgenges: First, judging asymptotic behaviour from finite sequences involves guess work; and second, there are no guarantees for the approximation to be a subset of the true region of attraction. Relaxing asymptotic convergence to finite-step reachability of a provably stable subset, scenario optimization can provide probabilistic bounds for the accuracy of the region-of-attraction estimate based on the number of samples \cite{Tempo2013}. Previous work on data-driven reachable set estimation considered probabilistic outer approximations by ellipsoids \cite{Devonport2020}. For the complementary problem of maximal positively invariant set, \cite{Korda2020} proved probabilistic  bounds for sample-based estimates using arbitrary basis functions; however, the number of samples to ensure a given confidence level is difficult to compute and grows exponentially with the dimension of the basis. In \cite{Shen2022}, subsets of the region of attraction were approximated by spheres or polyhedra which only provide conservative estimates.

In this paper, we focus on estimating the region of attraction for a class of linear systems 
commonly arising in optimization schemes and neural networks. We employ a limit on the truncated converse Lyapunov function given by \cite{Balint2006} as sufficient condition for asymptotic stability of a finitely sampled sequence; for the class of systems in this paper, we can find an upper bound as maximum of a monotone scalar function. Solving optimization problems over two independent sets of sampled stable and unstable trajectories, respectively, we aim to find a probabilistic (inner) approximations with a polynomial shape obtained from data. We prove that our approximations converge (in probability and distance) to the true region of attraction if the number of samples and the maximum polynomial degree increases.
Parts of our results can, with minor modifications, also be used for the approximation of reachable sets and/or applied to general, nonlinear or even unknown dynamics.

The remainder of this paper is organized as follows: 
Section~\ref{sec:problem} states the problem of region of attraction estimation via truncated converse Lyapunov functions.
Section~\ref{sec:scenopt} defines optimization over random variables and relates the sample complexity to the reliability of the empirical maximum; Section~\ref{sec:setapprox} introduces the notion of inner and outer approximations for the region of attraction; Section~\ref{sec:probest} presents our main probabilistic algorithm. The algorithm is analyzed and its convergence in probability to the true region of attraction is proven in Section~\ref{sec:analysis}. 
Finally, 
numerical examples for saturated LQR and suboptimal model-predictive control are presented in Section~\ref{sec:examples}.

\subsubsection*{Notation}
Let $\mathbb N$ and $\mathbb R$ denote the natural and real numbers, respectively.
We consider the Euclidean space~$\mathbb R^n$ equipped with inner product $\langle \cdot, \cdot \rangle: \mathbb R^n \times \mathbb R^n \to \mathbb R$ and induced norm $\| \cdot \|_2: x \mapsto \sqrt{\langle x, x \rangle}$. 
Let $\mathbb S_m$ (resp., $\mathbb S_m^+$) denote the subset of symmetric (resp., positive semidefinite) matrices in $\mathbb R^{m \times m}$ and take $A, B \in \mathbb R^{m \times \ell}$; the Frobenius inner product is $\langle A, B \rangle = \operatorname{trace} (A^{\mathrm T} B)$; and the Kronecker outer product is $A \otimes B \in \mathbb R^{m\ell \times m\ell}$.
The norm ball of radius $r > 0$, 
excluding the origin, is defined as $\mathscr B^*(r) = \{ x \in \mathbb R^n \mid 0 < \| x \|_2 < r \}$. When applied to vectors of same lengths, the inequalities ``$\leq$'' and ``$\geq$'' are to be understood element-wise.

A probability space is a tuple $(\Omega, \Sigma, P)$, where $\Omega$ is the sample space, the events $\Sigma$ are a $\sigma$-algebra of subsets of $\Omega$, and the probability function $P: \Sigma \to [0, 1]$ satisfies that $P(\Omega) = 1$ and $P(S_1 \cup S_2) = P(S_1) + P(S_2)$ for any $S_1, S_2 \subset \Omega$ with $S_1 \cap S_2 = \varnothing$. A random variable defined on a probability space $(\Omega, \Sigma, P)$ is a measurable function $z: \Omega \to \mathcal S$, or $z \in \rdvar {\mathcal S}$ for short, where $\mathcal S$ is a finite-dimensional vector space (the {\em support} of $z$); we write the probability that $z \in A$ for some $A \subset \mathcal S$ as $\prob {z \in A} = P(\{\omega \in \Omega \, | \, z(\omega) \in A \})$.
A sequence of random variables $\{ z_k \in \Omega \to \mathcal S \}_{k \geq 0}$, or short $\{ z_k \}_{k \geq 0} \subset \rdvar {\mathcal S}$, is said to converge {\em almost surely} to the random variable $z \in \rdvar {\mathcal S}$ if and only if
\begin{align*}
	\prob {\lim_{k \to \infty} \| z_k - z \|_2 = 0} = 1
\end{align*}
and to converge {\em in probability} to $z$ if and only if
\begin{align*}
	\lim_{k \to \infty} \prob {\| z_k - z \|_2 < \epsilon} = 1
\end{align*}
for any $\epsilon > 0$.

We denote the vector of monomials of $y \in \mathbb R^n$ up to degree $q \in \mathbb N$ by $Z_q\{y\}$, its length by $n_q$, and the outer product as $Z_q^2 = Z_q \otimes Z_q$. A polynomial function $p: \mathcal D \to \mathbb R$ with coefficients $\theta \in \mathbb R^{n_q}$ is given by $p: y \mapsto \langle \theta, Z_q\{y\} \rangle$.

\section{Problem Statement}
\label{sec:problem}
We consider solutions $\{ x_k \}_{k \geq 0} \subset \mathbb R^n$ to the discrete-time initial value problem
\begin{align}
	\label{eq:system}
	\Pi(x): \, \left\{
	\begin{aligned}
	x_{k+1} &= A x_k + \phi(x_k) \\
	x_0 &= x
	\end{aligned}
	\right.
\end{align}
where $\mathbb R^n$ is the state space, $A: \mathbb R^n \to \mathbb R^n$ is a linear operator, and $\phi: \mathbb R^n \to \mathbb R^n$ is a deterministic nonlinear operator. The dynamics $\Pi(x)$ are possibly unknown but bestowed with the following assumption.

\begin{assumption}
	\label{ass:regularity}
    The function $\phi$ is Lipschitz continuous and satisfies $\phi(0) = 0$. Moreover, the origin is locally asymptotically stable for \eqref{eq:system}.
\end{assumption}

Since we assume that \eqref{eq:system} is locally asymptotically stable around the origin, trajectories starting sufficiently close to the origin remain close and there exists a nonempty (but not necessarily connected) set of initial conditions that lead to converging state trajectories.

\begin{definition}
	The true {\em region of attraction} $\mathcal R_\infty$ of \eqref{eq:system} is the largest set $R \subset \mathbb R^n$ such that any solution $\{x_k\}_{k \geq 0}$ of $\Pi(x)$ converges to the origin if $x \in R$.
\end{definition}

We introduce the Hausdorff distance of two bounded sets $A, B \subset \mathbb R^n$ as
\begin{align*}
	\varrho_{\mathrm H}(A, B) = \max \{ \sup_{x \in A} \operatorname{dist}(x; B), \sup_{y \in B} \operatorname{dist}(y; A) \}
\end{align*}
where $\operatorname{dist}$ denotes the distance between a point and a set, i.e., $\operatorname{dist}(x; B) = \inf_{y \in B} \| x - y \|_2$.

\begin{problem}
	Solve
	\begin{align*}
		\min_\theta \varrho_{\mathrm H}(R_\theta, \mathcal R_\infty) \quad \text{s.t. $R_\theta \subset \mathcal R_\infty$}
	\end{align*}
	where $R_\theta \subset \mathbb R^n$ is a set estimate parametrized by $\theta$.
\end{problem}

By Assumption~\ref{ass:regularity}, the origin is a stationary condition and $\phi$ is bound (by $\zeta > 0$), that is, $|| \phi(x) || \leq \zeta || x ||$ holds for all $x \in \mathbb R^n$. Such systems arise in control of linear systems by neural networks, where the state space comprises of system states and hidden layer neurons, and $\phi$ is a nonlinear activation function (such as $\tanh$ or $\max\{x, 0\}$); in linear model-predictive control using a convex solver, possibly with a fixed number of iterations, where $\phi$ is the projection onto the feasible set; or, with some relaxation of the continuity assumptions, in the analysis of transitional flows for nonlinear fluids \cite{Kalur2022}. The state space typically is an Euclidean vector space.

In this paper, we rely on data to compute $\theta$. We assume that we can sample the solution of $\Pi(x)$ for any (finite) set of initial conditions $x \in \mathbb R^n$ of our choice and observe a finite-horizon response $\{x_k\}_{k=0}^p$ with $p \in \mathbb N$. This approach raises a number of challenges: To begin with, we cannot decide beyond doubt whether the solution is going to converge to the origin, that is, whether $x \in \mathcal R_\infty$. Instead, we are going to introduce a family of finite-time decidable sets $\mathcal R_p \subset \mathcal R_\infty$ approaching $\mathcal R_p$ as $p \to \infty$
and compute an estimate for $\mathcal R_p$ for some given $p$.
Reliance on finite data samples also means that we cannot guarantee that our estimate $R_\theta$ is a true subset of $\mathcal R_p$. We will hence resort to a notion of probabilistic set approximation that allows us to bound the probability that a point is incorrectly predicted to be stable. At last, we need to carefully choose a parametrization that allows us to approximate $\mathcal R_p$ arbitrarily well. To that extent, we will associate $R_\theta$ as the sublevel set of a polynomial function $v$ and $\theta$ with its vector of coefficients.

\subsection*{Objectives}
Let $p \in \mathbb N$ be given.
The goal of this paper is to obtain an estimate $R_\theta$ of $\mathcal R_p$, parametrized by the vector of polynomial coefficients $\theta$,
based on sample trajectories of \eqref{eq:system}
subject to the following probabilistic objectives.
\begin{enumerate}
	\item {\bf Accuracy}: The probability that a point $x \in R_\theta$ is {\em not} in $\mathcal R_p$ is bounded by some $\epsilon > 0$.
	\item {\bf Validity}: The bound $\epsilon$ is given specifically and independently of the polynomial degree.
	\item {\bf Convergence}: With increasing polynomial degree, the estimates $R$ approaches.
\end{enumerate}

Such estimates of the region of attraction can be used as certificates for correct behaviour of a nonlinear system.

\subsection*{Related work}
The problem of estimating $\mathcal R_\infty$ for a given system has a long history in nonlinear analysis. A popular choice are approaches based on polynomial Lyapunov functions. 
Polynomial approximation for $\mathcal R_\infty$ aims to find a sublevel set 
\[
	R_{v,c} = \{ x \in \mathbb R^n \mid v(x) \leq c \}
\] 
where $v: \mathbb R^n \to \mathbb R$ is a polynomial function and $c \geq 0$, that 
\begin{itemize}
	\item minimizes $\varrho_{\mathrm H}(R_{v,c}, \mathcal R_\infty$; and
	\item ensures that $R$ is a subset of $\mathcal R_\infty$.
\end{itemize}
Methods to approximate the region of attraction by constrained polynomial optimisation include \cite{topcuEtAl2008,khodadadiEtAl2014}. A more tractable relaxation is to obtain the shape $v(\cdot)$ by unconstrained optimization, then search for the largest level set that is contained by $R_\infty$. This is the strategy employed in \cite{jonesEtAl2017}. The works cited here rely on dissipativity conditions to provide sufficient conditions for a Lyapunov function and are computationally heavy.

\section{Mathematical Background}
A candidate Lyapunov function is the partial sum
\begin{align}
	V_p(x) =_\text{def} \sum_{k=0}^p \| x_k \|_2^2
\end{align}
where $\{x_k\}_{k \geq 0} \subset \mathbb R^n$ solves $\Pi(x)$
and $p \in \mathbb N \cup \{ \infty \}$. It is easy to see that, if converging, $V_\infty$ is a Lyapunov function and thus, its domain is a subset of $\mathcal R_\infty$. Reverse statements, that is, conditions under which the domain of $V_\infty$ {\em equals} the (true) region of attraction of \eqref{eq:system}, are known as converse theorems \cite{Ortega1973,Jiang2002,Zeng2009,Geiselhart2014}.  In general, the region of attraction is undecidable \cite{blum1998} and hence, there exists no computational form of $V_\infty$. 

\subsection{Truncated Lyapunov function}
We rely on the results of \cite{Balint2006} for a lower bound on the truncated series.
Choose $\tilde p \in \mathbb N$ and $\tilde r > 0$ such that any trajectory $\{x_k\}_{k \geq 0}$ with $x_0 \in \mathscr B^*(\tilde r)$ satisfies
\begin{align*}
	\| x_p \|_2 < \| x_0 \|_2
\end{align*}
for all $p \in \{ \tilde p, \tilde p + 1, \ldots, 2 \tilde p - 1 \}$. 
It can then be shown\footnote{Compare \cite[Lemma~3.3]{Balint2006}.} that any trajectory $\{x_k\}_{k \geq 0}$ with $x_0 \in \mathscr B^*(\tilde r)$ satisfies $x_k \in \mathscr B^*(\tilde r)$ for all $k \geq \tilde p$. 
In order to efficiently decide $\mathcal R_p$, we compute a lower bound for $\bar r$ in the appendix.
For any $p \geq \tilde p$, we define 
\begin{align}
	\mathcal R_p = \{ x \in \mathbb R^n \mid V_p(x) < c_p \}
\end{align}
where $c_p = (p + 1) \tilde r^2$.
We obtain the following result.

\begin{theorem}
	Let $x \in \mathbb R^n$ be an initial condition for \eqref{eq:system}; 
	$x \in \mathcal R_\infty$ if and only if there exists $p \geq \tilde p$ such that $x \in \mathcal R_p$.
\end{theorem}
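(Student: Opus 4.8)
The plan is to prove the two implications separately, with both resting on the bridge fact that the punctured ball $\mathscr{B}^*(\tilde r)$ is contained in $\mathcal R_\infty$. This inclusion I will take from the contraction hypothesis on $\tilde r, \tilde p$ together with the invariance conclusion quoted from \cite[Lemma~3.3]{Balint2006}.

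First I would treat the ``if'' direction. Assume $x \in \mathcal R_p$ for some $p \geq \tilde p$, so that $\sum_{k=0}^p \|x_k\|_2^2 < (p+1)\tilde r^2 = c_p$. Dividing by $p+1$ shows the arithmetic mean of the $\|x_k\|_2^2$ lies strictly below $\tilde r^2$, so by a pigeonhole argument at least one index $k^\star \in \{0,\dots,p\}$ satisfies $\|x_{k^\star}\|_2 < \tilde r$. If $x_{k^\star} = 0$ the trajectory already sits at the equilibrium; otherwise $x_{k^\star} \in \mathscr{B}^*(\tilde r)$, and since the tail $\{x_k\}_{k \geq k^\star}$ is itself a solution of $\Pi(x_{k^\star})$, the bridge fact makes it converge to the origin. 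A finite prefix of $k^\star$ steps does not affect convergence, so $x \in \mathcal R_\infty$.

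Second, for the ``only if'' direction I would assume $x \in \mathcal R_\infty$, so that $\|x_k\|_2 \to 0$ and hence $\|x_k\|_2^2 \to 0$. The Ces\`aro mean of a null sequence is again null, so $\tfrac{1}{p+1} V_p(x) = \tfrac{1}{p+1}\sum_{k=0}^p \|x_k\|_2^2 \to 0$ as $p \to \infty$. Since $\tilde r^2 > 0$, there is a threshold $P$ beyond which this mean stays below $\tilde r^2$; choosing any $p \geq \max\{P, \tilde p\}$ yields $V_p(x) < (p+1)\tilde r^2 = c_p$, i.e.\ $x \in \mathcal R_p$. This direction is essentially immediate once convergence is granted.

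I expect the main obstacle to be justifying the bridge fact $\mathscr{B}^*(\tilde r) \subseteq \mathcal R_\infty$, which upgrades the mere invariance statement of \cite[Lemma~3.3]{Balint2006} to genuine attraction. The contraction hypothesis, applied repeatedly over the windows $\{\tilde p,\dots,2\tilde p - 1\}$, makes the subsampled sequence $\{\|x_{j\tilde p}\|_2\}_{j \geq 0}$ strictly decreasing and bounded below, hence convergent to some $L \geq 0$; ruling out $L > 0$ needs a compactness-and-continuity argument, extracting a limit point $x^\star$ with $\|x^\star\|_2 = L$ and contradicting $\|F^{\tilde p}(x^\star)\|_2 < L$ obtained from the contraction property, where $F$ denotes the one-step map of $\Pi$. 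Convergence of the full trajectory, rather than only the $\tilde p$-subsampled one, then follows from the linear growth bound $\|F(x)\|_2 \leq (\|A\| + \zeta)\|x\|_2$ implied by Assumption~\ref{ass:regularity}, which controls the at most $\tilde p$ intermediate steps uniformly as the subsequence vanishes.
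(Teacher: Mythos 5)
Your argument is correct, but it is worth noting that the paper itself does not prove this theorem at all: its ``proof'' is a citation to Theorems~3.7 and~3.10 of \cite{Balint2006}, so you have supplied a self-contained derivation where the paper defers to the literature. Your two outer implications are clean: the pigeonhole/averaging step ($V_p(x)<(p+1)\tilde r^2$ forces some $\|x_{k^\star}\|_2<\tilde r$) for ``if'', and the Ces\`aro-mean observation for ``only if'', are exactly the reasons the threshold $c_p=(p+1)\tilde r^2$ is the right one. The only substantive content is the bridge fact $\mathscr B^*(\tilde r)\subseteq\mathcal R_\infty$, and your handling of it is sound: the hypothesis $\|x_{\tilde p}\|_2<\|x_0\|_2$ iterates to make $\{\|x_{j\tilde p}\|_2\}_{j\ge 0}$ strictly decreasing, the limit $L>0$ is excluded by extracting a limit point in the compact annulus and using continuity of $F^{\tilde p}$ (available since $\phi$ is Lipschitz), and the intermediate indices are absorbed by the growth bound $\|F(x)\|_2\le(\|A\|+\zeta)\|x\|_2$. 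A small structural difference from \cite{Balint2006} is that their Lemma~3.3 uses the full window $p\in\{\tilde p,\dots,2\tilde p-1\}$ to obtain invariance of $\mathscr B^*(\tilde r)$ at \emph{every} step $k\ge\tilde p$, whereas you only invoke the single index $p=\tilde p$ and compensate for the unmonitored intermediate steps with the Lipschitz bound; this buys a shorter argument at the cost of a (harmless) factor $(\|A\|+\zeta)^{\tilde p}$ in the transient. Do make explicit, as you do implicitly, that the origin is a fixed point ($F(0)=0$ by Assumption~\ref{ass:regularity}) so the degenerate case $x_{j\tilde p}=0$ terminates the induction rather than breaking it.
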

\begin{proof}
	See \cite[Theorems~3.7 and 3.10]{Balint2006}.
\end{proof}

In other words, when sampling the solution of $\Pi(x)$ for some given $x \in \mathbb R^n$, we can decide whether or not $x \in \mathcal R_p$ after a finite number of steps. Given $p \geq \tilde p$, the polynomial approximation problem of $\mathcal R_p$ can be written as 
\begin{align}
	\label{eq:problem}
	\min_{v(\cdot)} \sup_{x \in \mathcal R_p} \| v(x) - V_p(x) \|_2
	\quad \text{s.t. $R \subseteq \mathcal R_p$}
\end{align}
where $v$ is a polynomial function with real-valued coefficients and fixed degree.

\subsection{Scenario Optimization}
\label{sec:scenopt}
When using data to solve optimization problems such as \eqref{eq:problem}, deterministic constraints are replaced by so-called chance constraints leading to a probabilistic optimization. The scenario approach of \cite{Tempo2013} then links the sample complexity of the randomized algorithm to its reliability and accuracy with respect to the original, deterministic optimization problem.

\begin{definition}
Let $z \in \rdvar {\mathcal S}$ be a random variable, $\Theta \in \mathbb R^{n_\Theta}$ be a set of parameter vectors, and $f: \mathcal S \to \mathbb R$ be a measurable cost function;
the {\em probabilistic supremum} $\sup^\epsilon f(z)$ for $\epsilon > 0$ is the smallest number $\gamma$ satisfying $\prob{f(z) \leq \gamma} \geq 1 - \epsilon$.
Moreover, the {\em empirical maximum} is given as
\begin{align}
	\max^N f(z) = \max_{i \in \{1, \ldots, N\}} f(z^{(i)})
\end{align}
where $( z^{(1)}, \ldots, z^{(N)} ) \subset \rdvar {\mathcal S}$ is a tuple of $N \in \mathbb N$ independent random variables (the samples) with distribution identical to that of $z$. 
\end{definition}

It can be shown \cite[Theorem~7.4]{Tempo2013} that $\max^N f(z)$ converges almost surely to $\sup f(z)$ if $N \to \infty$, provided that $f$ is continuous at the optimal solution $\hat z$ of $\sup f(z)$ and $\prob {z \in B} > 0$ for any neighbourhood $B \subset \mathcal S$ of $\hat z$. 

When estimating the region of attraction using trajectory samples, we are going to optimize over the empirical maximum of a parametrized cost function. To study accuracy and convergence of our result, we define the probabilistic optimization problems
\begin{subequations}
	\label{eq:probopt}
\begin{alignat}{2}
\label{eq:probopt-deterministic}
&\hat \gamma & {} &= \min_\beta \sup_z g(\beta, z) \\
\label{eq:probopt-probabilistic}
&\gamma_\epsilon & {} &= \min_\beta \sup_z^\epsilon g(\beta, z) \\
\label{eq:probopt-empirical}
&\gamma^N & {} &= \min_\beta \max_z^N g(\beta, z)
\end{alignat}
\end{subequations}
where $\beta \in \Theta$ and $g: \mathcal S \times \Theta \to \mathbb R$ is measurable. 
If either $\Theta$ is compact or $g$ is bounded from below, then $\gamma^N$ is a well-defined random variable with support in $(-\infty, \hat \gamma]$.
Here, \eqref{eq:probopt-empirical} represents the optimization problem our randomized algorithm will solve; Eq.~\eqref{eq:probopt-deterministic} corresponds to the original problem; and \eqref{eq:probopt-probabilistic} is used to formulate the following, probabilistic results.
Denote the optimal solution(s) to \eqref{eq:probopt} by $\arg \hat \gamma$, $\arg \gamma_\epsilon$, and  $\arg \gamma^N$, respectively.

\begin{lemma}
	\label{lem:probopt-convergence}
	Let $\epsilon, \delta > 0$ be confidence levels; then there exists $N \in \mathbb N$ such that 
	\begin{align}
		\label{eq:probopt-convergence}
		\prob {\gamma^N < \gamma_\epsilon} < \delta
	\end{align}
	In addition, let $\Theta$ be compact and convex, $g(\beta, \cdot)$ be convex in $\beta$, and $\beta_N \in \arg \gamma^N$ be unique; if 
	\begin{align}
		\label{eq:probopt-bound}
		\epsilon \, N \geq \frac{e}{e-1} (\log \delta^{-1} + n_\Theta)
	\end{align}
	where $\log(\cdot)$ denotes the natural logarithm and $e$ its base,
	then $\prob {\gamma^N < \sup_z^\epsilon g(\beta_N, z)} < \delta$.
\end{lemma}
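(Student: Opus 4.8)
The plan is to treat the two assertions separately: inequality~\eqref{eq:probopt-convergence} is a convergence statement resting on the almost-sure convergence of the empirical maximum quoted from \cite[Theorem~7.4]{Tempo2013}, whereas the bound~\eqref{eq:probopt-bound} is an instance of the scenario optimization theorem. For the first part I would begin from the elementary fact that the probabilistic supremum never exceeds the ordinary supremum, so that $\gamma_\epsilon \leq \hat\gamma$. Since for each fixed $\beta$ the empirical maximum $\max_z^N g(\beta, z)$ is nondecreasing in $N$ and tends almost surely to $\sup_z g(\beta, z)$, the outer minimizer $\gamma^N$ in~\eqref{eq:probopt-empirical} is itself monotone and converges almost surely, hence in probability, to $\hat\gamma$. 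Setting $\eta = \hat\gamma - \gamma_\epsilon$ and invoking convergence in probability,
\begin{align*}
	\prob{\gamma^N < \gamma_\epsilon} = \prob{\hat\gamma - \gamma^N > \eta} \longrightarrow 0,
\end{align*}
so that some finite $N$ drives this probability below $\delta$.

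The delicate point in the first part is the degenerate case $\gamma_\epsilon = \hat\gamma$, where $\eta = 0$ and the preceding estimate is vacuous. Here I would argue that the regularity conditions underlying \cite[Theorem~7.4]{Tempo2013} --- continuity of the cost at the supremizing argument and positive probability on each of its neighbourhoods --- exclude an atom at the essential supremum of mass exceeding $\epsilon$; this yields $\sup_z^\epsilon g(\beta, z) < \sup_z g(\beta, z)$ for every $\beta$ and, upon minimizing, the strict inequality $\gamma_\epsilon < \hat\gamma$, restoring $\eta > 0$. Establishing this strictness cleanly is, I expect, the only genuine obstacle for~\eqref{eq:probopt-convergence}.

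For the second part I would recast~\eqref{eq:probopt-empirical} in epigraph form as the convex scenario program minimizing $\gamma$ over $(\beta, \gamma)$ subject to $g(\beta, z^{(i)}) \leq \gamma$ for $i = 1, \ldots, N$; this program carries $d = n_\Theta + 1$ decision variables and, by assumption, the unique optimizer $(\beta_N, \gamma^N)$. The crucial reduction is to identify the event at hand with a constraint-violation event: for any realization of the samples, $\gamma^N < \sup_z^\epsilon g(\beta_N, z)$ implies, by minimality in the definition of the probabilistic supremum, that $\prob[z]{g(\beta_N, z) \leq \gamma^N} < 1 - \epsilon$, equivalently that the constraint-violation probability $q_N = \prob[z]{g(\beta_N, z) > \gamma^N}$ exceeds $\epsilon$. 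Taking probabilities over the sample tuple therefore gives
\begin{align*}
	\prob{\gamma^N < \sup_z^\epsilon g(\beta_N, z)} \leq \prob{q_N > \epsilon}.
\end{align*}

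It then remains to invoke the scenario optimization theorem of \cite{Tempo2013}, whose hypotheses --- convexity of $g(\cdot, z)$ in $\beta$, a compact convex $\Theta$, and uniqueness of the optimizer --- are precisely those assumed here, to bound the right-hand side by the binomial tail $\sum_{j=0}^{n_\Theta} \binom{N}{j} \epsilon^j (1 - \epsilon)^{N-j}$. The final, and most technical, step is to verify that the sample-size condition~\eqref{eq:probopt-bound} is sufficient for this tail to be at most $\delta$: this is a Chernoff-type estimate of the binomial tail in which the factor $e/(e-1)$ arises from optimizing the exponential bound and the offset $n_\Theta = d - 1$ reflects the single epigraph variable. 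I expect this tail estimate, rather than the scenario machinery itself, to be the main obstacle, the remainder amounting to bookkeeping around the cited theorems.
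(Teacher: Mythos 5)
Your second half reconstructs essentially the argument that the paper delegates to \cite[Corollary~12.1]{Tempo2013}: the epigraph reformulation with $d = n_\Theta + 1$ decision variables, the identification of $\{\gamma^N < \sup_z^\epsilon g(\beta_N, z)\}$ with the event that the violation probability of the scenario solution exceeds $\epsilon$, the Calafiore--Campi binomial tail $\sum_{j=0}^{d-1}\binom{N}{j}\epsilon^j(1-\epsilon)^{N-j}$, and the Alamo--Tempo-type estimate showing that \eqref{eq:probopt-bound} forces this tail below $\delta$. That part is correct and is the same route as the cited result; the remaining ``technical'' step you defer to is precisely the content of the citation, so nothing is missing there.

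The first half, however, takes a different route from the cited \cite[Theorem~8.1]{Tempo2013} and has a genuine gap. You deduce from the per-$\beta$ almost-sure convergence $\max_z^N g(\beta, z) \to \sup_z g(\beta, z)$ that the minimized value $\gamma^N = \min_\beta \max_z^N g(\beta, z)$ converges to $\hat\gamma = \min_\beta \sup_z g(\beta, z)$. This interchange of $\min_\beta$ with the limit in $N$ does not follow from pointwise convergence: $\gamma^N$ is monotone and bounded above by $\hat\gamma$, so it converges to some $\gamma^* \leq \hat\gamma$, but without uniformity in $\beta$ (compactness of $\Theta$ plus Dini-type arguments, none of which are assumed in the first claim) you cannot conclude $\gamma^* = \hat\gamma$, nor even $\gamma^* \geq \gamma_\epsilon$. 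The cited Theorem~8.1 avoids this entirely with a finite-sample product argument: for a \emph{fixed} function, $\{\max^N f(z) < \gamma_\epsilon\}$ requires all $N$ independent samples to satisfy $f(z^{(i)}) < \gamma_\epsilon$, and minimality of the probabilistic supremum gives $\prob{f(z) < \gamma_\epsilon} \leq 1 - \epsilon$, hence the event has probability at most $(1-\epsilon)^N < \delta$ for $N$ large --- no appeal to Theorem~7.4, and no degenerate case $\gamma_\epsilon = \hat\gamma$ to patch. For the version with $\min_\beta$, the clean route is to observe $\gamma_\epsilon \leq \sup_z^\epsilon g(\beta_N, z)$, so that $\{\gamma^N < \gamma_\epsilon\}$ is contained in the violation event you already control in the second half (or to note that the lemma is only ever invoked in the paper for a fixed function, i.e.\ with $n_\Theta = 0$). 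I recommend replacing your asymptotic argument with this direct bound.
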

\begin{proof}
	See \cite[Theorem~8.1 and Corollary~12.1]{Tempo2013} as well as remarks.
\end{proof}

In other words, the scenario optimization \eqref{eq:probopt-empirical} is $\delta$-reliable in the sense that with probability $1-\delta$ or greater, its optimal solution is $\epsilon$-accurate, that is, $\gamma^N$ is an upper bound of $g(\beta_N, z)$ for $z \in \rdvar {\mathcal S}$ with probability of at least~$1-\epsilon$.
The number of samples $N$ that is necessary to satisfy the confidence levels $\epsilon$ and $\delta$ is called {\em sample complexity}.
We will apply scenario optimization to polynomial regression. Here, $\Theta$ will denote the space of coefficient vectors (up to given degree) and the cost function is linear in the coefficients.

\subsubsection*{Optimization in multiple variables}
We can extend \eqref{eq:probopt} to optimization problems with multiple random variables,\footnote{While we make use of pairs of random variables only, the remainder of this subsection can easily be extended to arbitrary tuples.} viz.
\begin{align}
	\label{eq:probopt-multiple}
	\gamma' = \min_\beta \sup_{z_1, z_2} \max \{ g_1(\beta, z_1), g_2(\beta, z_2) \}
\end{align}
where $z_1 \in \rdvar {\mathcal S_1}$ and $z_2 \in \rdvar {\mathcal S_2}$ are random variables and $g_{1,2}: \Theta \times \mathcal S_{1,2} \to \mathbb R$ are measurable functions that are convex in the first variable. It is easy to see that \eqref{eq:probopt-multiple} is equivalent to the optimization problem in \eqref{eq:probopt-deterministic} for the extended cost function
\begin{align*}
	\bar g: (\beta, z) \mapsto \max \{ g_1(\beta, z_1), g_2(\beta, z_2) \}
\end{align*}
with $z = (z_1, z_2)$, which is again convex in $\beta$. Let again $\gamma_\epsilon$ and $\gamma^N$ denote the optimal values of the probabilistic optimization problems \eqref{eq:probopt-probabilistic} and \eqref{eq:probopt-empirical}, respectively, for $\bar g$. We obtain the following strengthening of Lemma~\ref{lem:probopt-convergence}.

\begin{proposition}
	\label{prop:probopt-multiple}
	Let $\epsilon, \delta > 0$ be confidence levels, let $\Omega$ be compact and convex, and $\beta_N \in \arg \gamma^N$ be unique; then
	\begin{align*}
		\prob {\gamma^N < \sup_{z_i}^\epsilon g_i(\beta_N, z_i)} < \delta
	\end{align*}
	for $i \in \{1,2\}$ if \eqref{eq:probopt-bound} is satisfied by $\epsilon$ and $\delta$.
\end{proposition}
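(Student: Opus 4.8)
The plan is to reduce Proposition~\ref{prop:probopt-multiple} to Lemma~\ref{lem:probopt-convergence} applied to the extended cost $\bar g$, and then to transfer the resulting bound from $\bar g$ to each individual $g_i$ by a monotonicity argument on the probabilistic supremum. As already observed in the text, $\bar g(\beta, z) = \max\{g_1(\beta, z_1), g_2(\beta, z_2)\}$ is measurable and convex in $\beta$, so under the stated hypotheses---compact and convex parameter set together with a unique $\beta_N \in \arg \gamma^N$---Lemma~\ref{lem:probopt-convergence} applies verbatim to $\bar g$ and yields $\prob{\gamma^N < \sup_z^\epsilon \bar g(\beta_N, z)} < \delta$ whenever \eqref{eq:probopt-bound} holds.

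The crux is the inequality
\[
	\sup_{z_i}^\epsilon g_i(\beta_N, z_i) \le \sup_z^\epsilon \bar g(\beta_N, z), \qquad i \in \{1,2\},
\]
which I would establish directly from the definition of the probabilistic supremum. Writing $\gamma^\star = \sup_z^\epsilon \bar g(\beta_N, z)$, the characterization of $\gamma^\star$ as the smallest admissible threshold gives $\prob{\bar g(\beta_N, z) \le \gamma^\star} \ge 1 - \epsilon$. Since $\bar g(\beta_N, z) \le \gamma^\star$ forces $g_i(\beta_N, z_i) \le \gamma^\star$ for each $i$, the event $\{\bar g(\beta_N, z) \le \gamma^\star\}$ is contained in $\{g_i(\beta_N, z_i) \le \gamma^\star\}$, whence $\prob{g_i(\beta_N, z_i) \le \gamma^\star} \ge 1 - \epsilon$. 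Thus $\gamma^\star$ is an admissible threshold for $g_i$, and because $\sup_{z_i}^\epsilon g_i(\beta_N, z_i)$ is by definition the smallest admissible threshold, the claimed inequality follows. Note that this step uses only the marginal law of each $z_i$ and therefore does not require $z_1$ and $z_2$ to be independent.

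Finally, I would combine the two facts. For each $i$, the inequality above gives the event inclusion
\[
	\{\gamma^N < \sup_{z_i}^\epsilon g_i(\beta_N, z_i)\} \subseteq \{\gamma^N < \sup_z^\epsilon \bar g(\beta_N, z)\},
\]
so monotonicity of $P$ together with the bound from Lemma~\ref{lem:probopt-convergence} yields $\prob{\gamma^N < \sup_{z_i}^\epsilon g_i(\beta_N, z_i)} \le \prob{\gamma^N < \sup_z^\epsilon \bar g(\beta_N, z)} < \delta$, as desired.

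I do not anticipate a serious obstacle, as the argument is essentially bookkeeping. The only points requiring care are keeping straight that $\sup_z^\epsilon$ is a quantile with respect to the joint law of $z = (z_1, z_2)$ whereas $\sup_{z_i}^\epsilon$ is taken with respect to the marginal of $z_i$, and verifying that the monotone direction of the quantile inequality is the one that makes the final event inclusion point the right way. It is also worth confirming that the compactness and convexity condition of Lemma~\ref{lem:probopt-convergence} (written for the parameter set) is indeed what is intended in the statement and is inherited by $\bar g$.
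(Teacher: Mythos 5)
Your proposal is correct and follows essentially the same route as the paper: apply Lemma~\ref{lem:probopt-convergence} to the extended cost $\bar g$ and transfer the bound to each $g_i$ via the inequality $\sup_{z_i}^\epsilon g_i(\beta_N, z_i) \le \sup_z^\epsilon \bar g(\beta_N, z)$, itself derived from the event inclusion $\{\bar g(\beta,z) \le \gamma\} \subseteq \{g_i(\beta,z_i) \le \gamma\}$. Your write-up is somewhat more explicit than the paper's (in particular the remark that only marginal laws are needed, and the careful direction-checking of the quantile inequality), but the underlying argument is identical.
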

\begin{proof}
	For any $\gamma \in \mathbb R$ and $\beta \in \Theta$, we have that $\bar g(\beta, z) \leq \gamma$ if and only if $g_1(\beta, z_1) \leq \gamma$ and $g_2(\beta, z_2) \leq \gamma$; that is,
	\begin{align*}
		\prob {\bar g(\beta, z) \leq \gamma} &\leq \prob {g_i(\beta, z_i) \leq \gamma}
	\end{align*}
	and hence, $\sup_{z_i}^\epsilon g_i(\beta, z_i) \leq \sup_z^\epsilon \bar g(\beta, z)$ for $i \in \{1,2\}$. The desired result then follows from Lemma~\ref{lem:probopt-convergence}.
\end{proof}

\subsection{Polynomial Set Approximations}
\label{sec:setapprox}
We want to establish polynomial estimates of $\mathcal R_p$ by scenario optimization. As data-driven approaches rarely yield estimates that are guaranteed to be inner or outer approximations, we introduce a probabilistic equivalent. We consider a subset $\mathcal X \subset \mathbb R^n$ of the state space.

\begin{definition}
	Take $\epsilon > 0$ and let $x \in \rdvar {\mathcal X}$ be a random variable; a set $X \subset \mathbb R^n$ is an {\em $\epsilon$-accurate outer approximation} of $\mathcal X$ if and only if $\prob {x \in X} \geq 1 - \epsilon$.
\end{definition}

Intuitively, the probability that $x \in \rdvar {\mathcal X}$ is {\em not} in $X$ is less than $\epsilon$. We define a probabilistic inner approximation likewise.

\begin{definition}
	Take $\epsilon > 0$ and let $x \in \rdvar {(\mathbb R^n \setminus \mathcal X)}$ be a random variable; a set $X \subset \mathbb R^n$ is an {\em $\epsilon$-accurate inner approximation} of $\mathcal X$ if and only if $\prob {x \not\in X} \geq 1 - \epsilon$.
\end{definition}

\begin{figure}[t]
    \center
    \includegraphics[width=.9\linewidth]{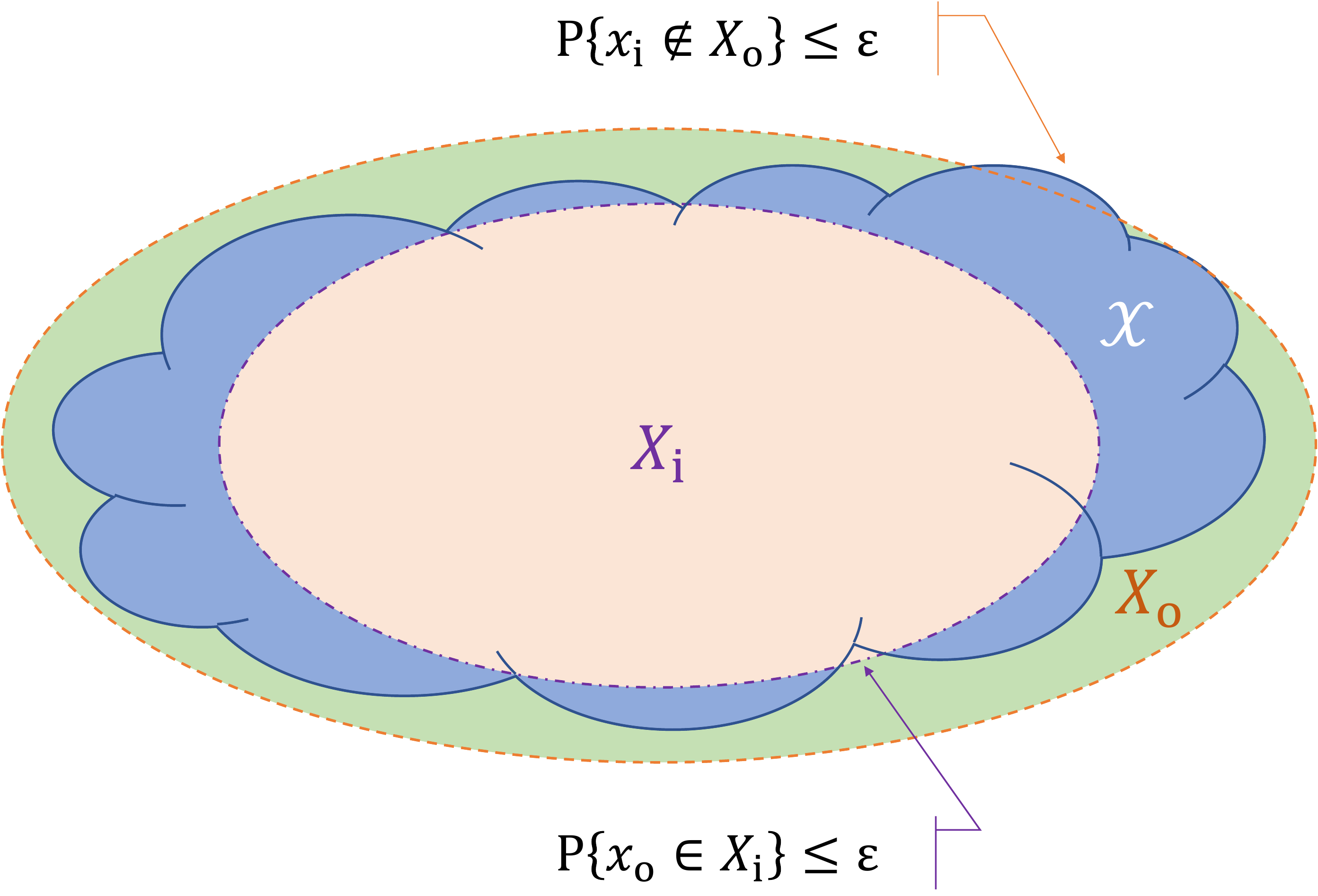}
    \caption{Illustration of $\epsilon$-accurate inner and outer approximations $X_{\mathrm i}$ and $X_{\mathrm o}$ of some set $\mathcal X$ with random variables $x_{\mathrm i} \in \mathcal X$ and $x_{\mathrm o} \in \mathbb R^n \setminus \mathcal X$.}
    \label{fig:approximates}
\end{figure}

The concept of probabilistic inner and outer approximates is illustrated in Fig.~\ref{fig:approximates}. 
Indeed, any sufficiently small (or large) set is likely to be an inner (outer) approximation.\footnote{This is true even in the deterministic case.} Therefore, we aim to find the probabilistic inner approximation which is, at the same time, as close as possible to an accurate outer approximation.
As demonstrated in the following result, the notion of set approximations is tied to that of scenario optimization. 

\begin{proposition}
	\label{prop:setapprox}
	Let $z \in \rdvar {\mathcal X}$ be a random variable, $f: \mathbb R^n \to \mathbb R$ be a measurable function, and $\epsilon, \delta > 0$ be confidence levels; then there exists $N \in \mathbb N$ such that
	\begin{align*}
		X_N = \{ x \in \mathbb R^n \, | \, f(x) \leq \max^N f(z) \}
	\end{align*}
	is an $\epsilon$-accurate outer approximation of $\mathcal X$ with probability $1 - \delta$ or larger.
\end{proposition}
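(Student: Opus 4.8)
The plan is to connect the empirical maximum $\max^N f(z)$ to the probabilistic supremum $\sup^\epsilon f(z)$ and then translate a bound on the former into the defining inequality for an $\epsilon$-accurate outer approximation. The key observation is that $X_N$ fails to be an $\epsilon$-accurate outer approximation of $\mathcal X$ precisely when $\prob{z \in X_N} < 1 - \epsilon$, i.e.\ when $\prob{f(z) \le \max^N f(z)} < 1 - \epsilon$. By the definition of the probabilistic supremum, this is equivalent to the event $\max^N f(z) < \sup^\epsilon f(z)$, since $\sup^\epsilon f(z)$ is the smallest threshold $\gamma$ for which $\prob{f(z) \le \gamma} \ge 1 - \epsilon$. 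Thus the failure probability of the outer-approximation property is exactly $\prob{\max^N f(z) < \sup^\epsilon f(z)}$, and the whole claim reduces to bounding this quantity by $\delta$.

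First I would cast this into the scenario-optimization framework of Lemma~\ref{lem:probopt-convergence} by taking a trivial (singleton) parameter set $\Theta$, so that the cost $g(\beta, z) = f(z)$ does not depend on $\beta$. Then the three problems in \eqref{eq:probopt} collapse: $\hat\gamma = \sup_z f(z)$, $\gamma_\epsilon = \sup_z^\epsilon f(z)$, and $\gamma^N = \max_z^N f(z)$. The first assertion of Lemma~\ref{lem:probopt-convergence} states that for any $\epsilon, \delta > 0$ there exists $N \in \mathbb N$ with $\prob{\gamma^N < \gamma_\epsilon} < \delta$. Substituting the identifications above gives precisely $\prob{\max^N f(z) < \sup_z^\epsilon f(z)} < \delta$, which by the equivalence established in the previous paragraph says that $X_N$ is an $\epsilon$-accurate outer approximation of $\mathcal X$ with probability at least $1 - \delta$.

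The main obstacle I anticipate is the measure-theoretic care needed to make the equivalence between the two events airtight. The argument rests on showing that the events $\{\,\prob{f(z) \le \max^N f(z)} < 1 - \epsilon\,\}$ and $\{\,\max^N f(z) < \sup^\epsilon f(z)\,\}$ coincide; this requires the map $\gamma \mapsto \prob{f(z) \le \gamma}$ to be right-continuous and nondecreasing so that its generalized inverse at level $1 - \epsilon$ is exactly $\sup^\epsilon f(z)$, and that $\max^N f(z)$ is itself a well-defined random variable. One should also confirm that the existence claim in Lemma~\ref{lem:probopt-convergence} does not secretly require the continuity/positivity hypotheses that appear in the almost-sure convergence statement cited earlier; since the lemma asserts existence of a \emph{finite} $N$ for given $\epsilon, \delta$ without those extra assumptions, it applies directly here. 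Everything else is a routine unwinding of definitions.
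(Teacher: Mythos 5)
Your proposal is correct and follows essentially the same route as the paper: both identify $\gamma^N = \max^N f(z)$ and $\gamma_\epsilon = \sup^\epsilon f(z)$, invoke the first claim of Lemma~\ref{lem:probopt-convergence} to get $\prob{\gamma^N < \gamma_\epsilon} < \delta$, and conclude via monotonicity that on the complementary event $\prob{f(z) \leq \gamma^N} \geq 1-\epsilon$. Your extra care about the exact equivalence of the failure events is a slight strengthening of what the paper writes but does not change the argument.
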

\begin{proof}
	Define $\gamma^N = \max^N f(z)$;
	by virtue of Lemma~\ref{lem:probopt-convergence}, there exists $N \in \mathbb N$ such that $\prob {\gamma_\epsilon \leq \gamma^N} \geq 1 - \delta$ and $\prob {f(z) \leq \gamma_\epsilon} \geq 1 - \epsilon$,
	where $\gamma_\epsilon = \sup^\epsilon f(z)$ is the probabilistic supremum of $f$ on $z \in \mathcal X$. Hence,
	\begin{align*}
		\prob {\prob {f(z) \leq \gamma^N} \geq 1 - \epsilon} \geq 1 - \delta
	\end{align*}
	which is the desired result by definition of $X_N$.
\end{proof}

It is not hard to derive a similar result for the inner approximation of $\mathcal X$; moreover, an explicit bound for $N$ based on $\epsilon$ and $\delta$ is obtained from \eqref{eq:probopt-bound} with $n_\beta = 0$.
In Proposition~\ref{prop:setapprox}, a probabilistic outer approximation of $\mathcal X$ is given as sublevel set $X_N$ of a function $f$ of which the level is determined by sampling $f$ on $\mathcal X$. The probability of a point $x \in \mathcal X$ not being in the sublevel set thus is equal to the probability of the empirical maximum being smaller than $f(x)$. Hence, accuracy of $X_N$ is linked to scenario optimization.

Clearly, an arbitrary choice for the function $f(\cdot)$ is likely not to lead to a good approximation. The work of \cite{Devonport2020} searched for the smallest ellipsoidal set that is an accurate outer approximation of $\mathcal X$ without any statement for inner approximations. Unfortunately, this approach cannot be easily applied to inner approximations as the empirical maximum of the volume is unbounded.
Instead, we are going to use polynomial functions and compute the vector of coefficients as randomized algorithm.
For the analysis, we apply to the result of Stone and Weierstrass. Here, let $f: \mathcal D \to \mathbb R$ be a given scalar function on $\mathcal D \subset \mathbb R^n$.

\begin{lemma}
	\label{lem:weierstrass}
	Let $\eta > 0$; if $f$ is Lipschitz continuous and $\mathcal D$ is compact, then there exists $q \in \mathbb N$ such that
	\begin{align*}
		\sup_{y \in \mathcal D} | \langle \theta, Z_q\{y\} \rangle - f(y) | \leq \eta
	\end{align*}
	for some vector $\theta \in \mathbb R^{n_q}$.
\end{lemma}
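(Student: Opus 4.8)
The plan is to recognize this statement as a direct consequence of the Stone--Weierstrass theorem, so the proof will amount to verifying that theorem's hypotheses and then translating its conclusion into the notation $\langle \theta, Z_q\{y\} \rangle$ used here. First I would observe that the Lipschitz hypothesis on $f$ is in fact stronger than what the conclusion requires: it merely guarantees that $f$ is continuous, hence $f \in C(\mathcal D)$, the space of continuous real-valued functions on the compact set $\mathcal D$ equipped with the supremum norm $\| g \| = \sup_{y \in \mathcal D} | g(y) |$. (If one wanted an explicit dependence of $q$ on $\eta$, the Lipschitz constant would enter through a quantitative Jackson-type estimate, but the statement only asserts existence of some $q$, so the qualitative density result suffices.)

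Next I would consider the collection $\mathcal A$ of all multivariate polynomials on $\mathcal D$, that is, all finite linear combinations of monomials in the coordinates of $y \in \mathbb R^n$. This set is a subalgebra of $C(\mathcal D)$, being closed under addition, scalar multiplication, and pointwise products. To invoke Stone--Weierstrass I would verify its two structural hypotheses: (i) $\mathcal A$ contains the constant functions, since constants are degree-zero polynomials; and (ii) $\mathcal A$ separates points, since for any distinct $x, y \in \mathcal D$ there is a coordinate index $j$ with $x_j \neq y_j$, and the monomial sending a point to its $j$-th coordinate then takes different values at $x$ and $y$.

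With these hypotheses in hand, Stone--Weierstrass asserts that $\mathcal A$ is dense in $C(\mathcal D)$ with respect to $\| \cdot \|$. Unpacking density for the given $f$ and tolerance $\eta > 0$ yields a polynomial $p \in \mathcal A$ with $\sup_{y \in \mathcal D} | p(y) - f(y) | \leq \eta$. Since $p$ has some finite total degree, I would call that degree $q$ and collect its coefficients into a vector $\theta \in \mathbb R^{n_q}$, so that $p(y) = \langle \theta, Z_q\{y\} \rangle$, which is exactly the claimed form. The only genuine subtlety, and hence the step I would treat most carefully, is that the domain $\mathcal D$ is an arbitrary compact subset of $\mathbb R^n$ rather than the one-dimensional interval of the classical Weierstrass theorem; this multivariate, arbitrary-compact-set generalization is precisely what the Stone--Weierstrass formulation supplies, so no separate extension-to-a-box argument is needed.
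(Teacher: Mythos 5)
Your proposal is correct and follows essentially the same route as the paper: both reduce the claim to the Stone--Weierstrass density of multivariate polynomials in $C(\mathcal D)$ for compact $\mathcal D$, pick a uniformly $\eta$-close polynomial, and let $q$ be its degree (you additionally verify the algebra/separation hypotheses, which the paper delegates to a textbook citation). No gaps.
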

\begin{proof}
	By \cite[Chapter 20, Theorem~3]{Cheney2000}, the polynomials in $y$ form a dense subset of the continuous functions on $\mathcal D$, that is, there exists a sequence of polynomials $\{p: \mathcal D \to \mathbb R\}$ that converges uniformly to $f$. Take $q$ as the degree of $\hat p$ such that $\sup_{y \in \mathcal D} | \hat p(y) - f(y) | \leq \eta$ to complete the proof.
\end{proof}

\section{Methodology}
\label{sec:methodology}
We will present a relaxed solution to the polynomial approximation problem of which the accuracy of the inner approximation is independent of either the polynomial degree or the number of system states.
Our approach is based on obtaining a probabilistic polynomial approximation for $V_p$ on $\mathcal R_p$, while guaranteeing that the resulting estimate is a probabilistic inner approximation. For that purpose, we solve two empirical optimization problems as summarized in Alg.~\ref{alg:main}; the first problem is to obtain a polynomial approximation of $V_p$ and the second problem is to correct for initial conditions outside of $\mathcal R_p$ lying inside the polynomial sublevel set of level $c_p$.

As we are going to show, the result of the probabilistic algorithm is accurate and reliable in the sense that the region of attraction estimate is an $\epsilon$-accurate inner approximation of $\mathcal R_p$ with probability $1-\delta$ or larger, where the confidence levels $(\epsilon, \delta)$ depend on the number of samples but not the degree of the polynomial approximation.
In addition, in the next section, we prove that the estimates converge in probability to $\mathcal R_p$ with increasing polynomial degree and increasing number of samples. 

\begin{algorithm}
	\begin{algorithmic}[1]
		\REQUIRE parameters $p \geq \tilde p$, $q \in \mathbb N$, and $N = (N_1, N_2) \subset \mathbb N$ as well as compact set $\mathcal D \subset \mathbb R^n$
		\STATE evaluate $V_p$ on $N_1$ iid. samples of $\mathcal R_p \times (\mathcal D \setminus \mathcal R_p)$
		\STATE empirically approximate $V_p$ by degree-$2q$ polynomial 
		\STATE evaluate prediction on $N_2$ iid. samples of $\mathcal D \setminus \mathcal R_p$
		\STATE adjust set level to correct for polynomial prediction 
		\ENSURE empirical region of attraction estimate $R_{N,q}$
	\end{algorithmic}
	\caption{Probabilistic region-of-attraction estimation algorithm.}
	\label{alg:main}
\end{algorithm}

\subsection{Randomized Algorithm}
\label{sec:probest}
We define a hierarchy of empirical optimization problems. For any $p \geq \tilde p$, the set $\mathcal R_p$ is bounded by definition of $V_p$. Take a compact set $\mathcal D \subset \mathbb R^n$ with $\mathcal R_p \subsetneq \mathcal D$ and let $\breve x \in \rdvar {\mathcal R_p}$ and $\hat x, \bar x \in \rdvar {(\mathcal D \setminus \mathcal R_p)}$ be random variables; observe that, by definition, $V_p( \bar x ) \geq c_p$ holds surely. In the following problem, we make use of tuples of lengths $N_1, N_2 \in \mathbb N$ comprised of random finite trajectories subject to \eqref{eq:system} with initial conditions $( \breve x_0^{(i)} )_{i=1}^{N_1}$, $( \hat x_0^{(j)} )_{j=1}^{N_1}$, and $( \bar x_0^{(j)} )_{j=1}^{N_2}$ being, respectively, independent and with distribution\footnote{Note that no further assumptions on the nature of this distributions is needed.} identical to $\breve x$, $\hat x$, and $\bar x$.

\begin{problem}
	\label{prob:probest}
	Take $q \in \mathbb N$ and $N = (N_1, N_2)$; solve
	\begin{subequations}
	\begin{align}
		\label{eq:probest-shape}
		\begin{aligned}
		\eta^N = \min_\Theta \max \Big\{ & \max_{\breve x}^{N_1} | \langle \Theta, Z^2_q\{ \breve x \} \rangle - V_p( \breve x ) |, \\
			& \max_{\hat x}^{N_1} \, ( c_p - \langle \Theta, Z^2_q\{ \hat x \} \rangle ) \Big\}
		\end{aligned}
	\end{align}
	where $\Theta \in \mathbb S^+_{n_q}$ is positive semidefinite, and
	\begin{align}
		\label{eq:probest-outer}
		c^N &= \min_{\bar x}^{N_2} \langle \Theta_N, Z^2_q\{\bar  x\} \rangle
	\end{align}
	\end{subequations}
	with $\Theta_N \in \arg \eta^N$.
\end{problem}

\begin{algorithm}[t]
	\begin{algorithmic}[1]
	\REQUIRE $N$ iid. instances of $\breve x \in \rdvar {\mathcal R_p}$ and $\hat x \in \rdvar {(\mathcal D \setminus \mathcal R_p)}$
	\STATE $v^{(i)} := V_p(\breve x^{(i)})$ and $z_1^{(i)} := Z^2_q\{ \breve x^{(i)} \}$ for $i = 1, \ldots, N$
	\STATE $z_2^{(j)} := Z^2_q\{ \hat x^{(j)} \}$ for $j = 1, \ldots, N$
	\STATE find $(\eta, \Theta) \in \mathbb R \times \mathbb S^+_{n_q}$ minimizing $\eta$ subject to
		\begin{align*}
			\left.
		\begin{alignedat}{2}
			\langle z_1^{(i)}, \Theta \rangle + \eta &\geq & {} &v^{(i)} \\
			\eta - \langle z_1^{(i)}, \Theta \rangle &\geq & -{} &v^{(i)} \\
			\langle z_2^{(j)}, \Theta \rangle + \eta &\geq & {} &c_p
		\end{alignedat}
			\right\} \quad 
		\begin{aligned}
			&\text{for $i = 1, \ldots, N$} \\
			&\text{for $j = 1, \ldots, N$}
		\end{aligned}
		\end{align*}
	\ENSURE optimal solution $(\eta^N, \Theta_N)$
\end{algorithmic}

	\caption{Compute instance of $\Theta_N \in \arg \eta^N$.}
	\label{alg:probest-inner}
\end{algorithm}

Problem~\ref{prob:probest} can be solved as two convex programs, as detailed in Algorithms~\ref{alg:probest-inner} and \ref{alg:probest-outer}.
Upon solution, we define the empirical region of attraction estimate $R_{N,q}$ as 
\begin{align*}
	R_{N,q} = \{ x \in \mathbb R^n \, | \, \langle \Theta_N, Z^2_q\{ x \} \rangle < c^N \}.
\end{align*}
where $\Theta_N$ and $c^N$ are random variables.
For the remainder of the paper, let $R_{N,q}$ be the result of Problem~\ref{prob:probest}.

\subsection{Accuracy \& Reliability}
Since $c^N$ is based on given and fixed parameters $\Theta_N$, we directly obtain a probabilistic bound on the accuracy of the inner approximation.

\begin{lemma}
	\label{lem:probest-outer-accurate}
	Let $\epsilon, \delta > 0$; if $N_2 \in \mathbb N$ satisfies
	\begin{align*}
		\epsilon N_2 \geq \frac{e}{e-1} \log \delta^{-1}
	\end{align*} 
	then $c^N$ satisfies
	\begin{alignp}
		\label{eq:probest-outer-accurate}
		\prob {\inf^\epsilon \langle \Theta_N, Z^2_q\{ \bar x \} \rangle < c^N} < \delta 
	\end{alignp}
	for any given $\Theta_N$.
\end{lemma}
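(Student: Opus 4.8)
The plan is to recognize the statement as the parameter-free, mirrored version of the scenario result in Lemma~\ref{lem:probopt-convergence} and to reduce it to that lemma by a negation argument. Since $\Theta_N$ is held fixed in \eqref{eq:probest-outer}---it is determined by the independent sample set of size $N_1$ used in \eqref{eq:probest-shape}---I would condition on $\Theta_N$ and treat $h(\bar x) = \langle \Theta_N, Z^2_q\{\bar x\} \rangle$ as a deterministic measurable function of the random variable $\bar x \in \rdvar{(\mathcal D \setminus \mathcal R_p)}$. The only randomness entering $c^N$ is then the $N_2$ independent draws $\bar x^{(j)}$, which are independent of the data fixing $\Theta_N$; this is precisely what licenses the phrase ``for any given $\Theta_N$''.

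First I would set $f = -h$ and record two elementary identities. Because an empirical minimum negates to an empirical maximum, $c^N = \min^{N_2} h(\bar x) = -\max^{N_2} f(\bar x)$; and taking $\inf^\epsilon f := -\sup^\epsilon(-f)$ as the natural mirror of the probabilistic supremum, $\inf^\epsilon h(\bar x) = -\sup^\epsilon f(\bar x)$. Substituting both into the event of interest and multiplying the inequality by $-1$ gives
\begin{align*}
	\{ \inf^\epsilon h(\bar x) < c^N \} = \{ \max^{N_2} f(\bar x) < \sup^\epsilon f(\bar x) \},
\end{align*}
so the claim becomes a bound on the probability that an empirical maximum undershoots the corresponding probabilistic supremum.

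Next I would apply Lemma~\ref{lem:probopt-convergence} to $f$ with no free parameters, i.e.\ $n_\Theta = 0$: the minimization over $\beta$ in \eqref{eq:probopt-probabilistic} and \eqref{eq:probopt-empirical} is vacuous, so the compactness, convexity, and uniqueness hypotheses hold trivially, and the sample-complexity bound \eqref{eq:probopt-bound} collapses to $\epsilon N_2 \geq \frac{e}{e-1} \log \delta^{-1}$, which is exactly the stated assumption. The lemma then yields $\prob{\max^{N_2} f(\bar x) < \sup^\epsilon f(\bar x)} < \delta$, which is \eqref{eq:probest-outer-accurate}. As an independent sanity check, a direct i.i.d.\ computation also works: by independence $\prob{\max^{N_2} f < \sup^\epsilon f} = (\prob{f < \sup^\epsilon f})^{N_2} \leq (1-\epsilon)^{N_2} \leq e^{-\epsilon N_2}$, where $\prob{f < \sup^\epsilon f} \leq 1 - \epsilon$ follows from continuity of the measure from below at the level $\sup^\epsilon f$; this is below $\delta$ already for $\epsilon N_2 > \log \delta^{-1}$, a condition implied by the stated bound.

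The step I expect to require the most care is the clean reduction to the parameter-free case of Lemma~\ref{lem:probopt-convergence}: its second part is phrased with a nontrivial optimization over $\beta \in \Theta$ together with convexity and uniqueness assumptions, so I must argue that these degenerate harmlessly once $\Theta_N$ is fixed and $n_\Theta = 0$, and that the direction of the inequality is preserved through the negation $f = -h$. Getting the strict-versus-nonstrict inequalities right in $\prob{f < \sup^\epsilon f} \leq 1 - \epsilon$ is the only other place where attention is genuinely needed.
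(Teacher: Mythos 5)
Your proposal is correct and follows essentially the same route as the paper, which simply notes that $\inf^\epsilon \langle \Theta_N, Z^2_q\{\bar x\}\rangle$ is finite (since $\mathcal D \setminus \mathcal R_p$ is bounded) and then invokes Lemma~\ref{lem:probopt-convergence} in the parameter-free case $n_\Theta = 0$; you have merely made explicit the negation/mirroring step and the degeneration of the convexity and uniqueness hypotheses that the paper leaves implicit. The direct i.i.d.\ computation you add is a valid independent check but not part of the paper's argument.
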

\begin{proof}
	Since $\mathcal D \setminus \mathcal R_p$ is bounded, $\inf^\epsilon \langle \theta_N, Z_q\{ \bar x \} \rangle \in \mathbb R$.
	Then the desired result follows directly from Lemma~\ref{lem:probopt-convergence}.
\end{proof}

It is important to note that the reliability of $R_{N,q}$ being an $\epsilon$-accurate inner subset of $\mathcal R_p$ depends solely on the number of samples $N_2$ in the second optimization step and is independent on the parameters in the first step.

\begin{remark}
	The purpose of the second argument in \eqref{eq:probest-shape} is to ensure that the polynomial approximation behaves well outside of $\mathcal R_p$, which is used in the following analysis. In practical applications with focus on a probabilistic {\em inner} approximation, the number of samples of $\hat x$ can be reduced without compromising Lemma~\ref{lem:probest-outer-accurate}.
\end{remark}

We state the following corollary for later use.

\begin{corollary}
	\label{cor:probest-outer-accurate}
	Let $\epsilon, \delta > 0$; there exists $N_2 \in \mathbb N$ such that $R_{N,q}$ is an $\epsilon$-accurate inner approximation of $\mathcal R_p$ with probability of at least $1 - \delta$.
	\noqed
\end{corollary}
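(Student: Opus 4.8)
The plan is to obtain the corollary as essentially a translation of Lemma~\ref{lem:probest-outer-accurate} into the language of the inner-approximation definition. First I would fix $\epsilon, \delta > 0$ and choose any $N_2 \in \mathbb N$ large enough that $\epsilon N_2 \geq \frac{e}{e-1} \log \delta^{-1}$; such an integer plainly exists because the right-hand side is a fixed finite number. This makes Lemma~\ref{lem:probest-outer-accurate} applicable, so that, with $\Theta_N$ treated as given from the first optimization step, the event
\begin{align*}
	\mathcal E = \big\{ \inf{}^\epsilon \langle \Theta_N, Z^2_q\{\bar x\} \rangle \geq c^N \big\}
\end{align*}
— the complement of the bad event in \eqref{eq:probest-outer-accurate} — occurs with probability at least $1-\delta$ over the $N_2$ samples used to compute $c^N$.

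Next I would carefully separate the two independent sources of randomness that the notion ``$\epsilon$-accurate inner approximation with probability $1-\delta$'' entails: the $N_2$ trajectory samples that determine $c^N$ (and hence the random set $R_{N,q}$), and a fresh test draw $\bar x \in \rdvar {(\mathcal D \setminus \mathcal R_p)}$. I would then argue conditionally on $\mathcal E$. Writing $\gamma_\epsilon = \inf^\epsilon \langle \Theta_N, Z^2_q\{\bar x\} \rangle$, the (dual) definition of the probabilistic infimum gives $\prob {\langle \Theta_N, Z^2_q\{\bar x\} \rangle \geq \gamma_\epsilon} \geq 1-\epsilon$, equivalently $\prob {\langle \Theta_N, Z^2_q\{\bar x\} \rangle < \gamma_\epsilon} \leq \epsilon$. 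On $\mathcal E$ we have $c^N \leq \gamma_\epsilon$, whence the sublevel-set inclusion
\begin{align*}
	\{ \langle \Theta_N, Z^2_q\{\bar x\} \rangle < c^N \} \subseteq \{ \langle \Theta_N, Z^2_q\{\bar x\} \rangle < \gamma_\epsilon \}
\end{align*}
yields $\prob {\bar x \in R_{N,q}} = \prob {\langle \Theta_N, Z^2_q\{\bar x\} \rangle < c^N} \leq \epsilon$, i.e. $\prob {\bar x \notin R_{N,q}} \geq 1-\epsilon$. By the definition of an inner approximation this is exactly the statement that $R_{N,q}$ is an $\epsilon$-accurate inner approximation of $\mathcal R_p$; since $\mathcal E$ has probability at least $1-\delta$, the corollary follows for the fixed $\Theta_N$. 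Because the conclusion holds for every value of $\Theta_N$ produced by the first step, it also holds after marginalizing over that step's randomness, confirming the remark that reliability depends only on $N_2$.

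The routine part — the sublevel-set monotonicity and the passage from $\inf^\epsilon$ to the bound on $\prob {\langle \Theta_N, Z^2_q\{\bar x\} \rangle < \gamma_\epsilon}$ — is immediate from the definitions. The one point that needs genuine care, and that I expect to be the main obstacle, is the bookkeeping of the nested probability: Lemma~\ref{lem:probest-outer-accurate} controls the randomness over the construction samples (giving the outer $1-\delta$), while accuracy is a statement about a fresh point $\bar x$ (giving the inner $1-\epsilon$), and the two must not be conflated. In particular I would make explicit that $\Theta_N$ and $c^N$ are held fixed when evaluating $\prob {\bar x \in R_{N,q}}$, so that conditioning on $\mathcal E$ is legitimate and the final quantified statement reads ``with probability $1-\delta$ over the samples, the constructed set $R_{N,q}$ is $\epsilon$-accurate.''
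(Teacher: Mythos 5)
Your proof is correct and follows essentially the same route as the paper, which states the corollary without proof as an immediate consequence of Lemma~\ref{lem:probest-outer-accurate} and performs exactly your unpacking (complement event, monotonicity of the sublevel set in the level, and the definition of the probabilistic infimum applied to a fresh draw of $\bar x$) inside the proof of Theorem~\ref{thm:probest}. Your explicit separation of the randomness over the construction samples from the randomness of the test point $\bar x$ is the right bookkeeping and matches the paper's intent.
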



\begin{algorithm}[t]
	\begin{algorithmic}[1]
	\REQUIRE $N$ iid. instances of $\bar x \in \rdvar {(\mathcal D - \mathcal R_p)}$ and $\Theta \in \mathbb S_{n_q}^+$
	\STATE $\tau^{(j)} := \langle \Theta, Z^2_q\{ \bar x^{(j)} \} \rangle$ for $j = 1, \ldots, N$
	\STATE find $c \in \mathbb R$ maximizing $c$ subject to
		\begin{align*}
			c \leq \tau^{(j)} \quad \text{for $j = 1, \ldots, N$}
		\end{align*}
	\ENSURE optimal value $c^N$
\end{algorithmic}
	\caption{Compute instance of $c^N$.}
	\label{alg:probest-outer}
\end{algorithm}

\section{Convergence Analysis}
\label{sec:analysis}
We have already established that the region-of-attraction estimate $R_{N,q}$ obtained from solving Problem~\ref{prob:probest} meets the first two of our objectives. We will now demonstrate that $R_{N,q}$ converges, in probability and with respect to a suitable measure of distances between sets, to $\mathcal R_p$. A key step of this proof is to show that $R_{N,q}$ is an outer approximation of a sublevel set of $V_p$ of which the level converges to $c_p$. Here, we need to ensure that the extrapolation of the polynomial approximation beyond $\mathcal R_p$ is well behaving; we thus make the following, technical assumption.

\begin{assumption}
	The random variables $\hat x$ and $\bar x$ are identically distributed.
\end{assumption}

%
%
We are ready to state our main result for the region of attraction estimate $R_{N,q}$.

\begin{theorem}
	\label{thm:probest}
	Let $\epsilon_1, \epsilon_2, \delta_1, \delta_2, \nu > 0$ be given; there exist $N_1, N_2, \allowbreak q \in \mathbb N$ and a set $\mathcal R^\nu \subset \mathcal R_p$ such that
	$R_{N,q}$ is an $\epsilon_2$-accurate inner approximation of $\mathcal R_p$ with probability greater than $1 - \delta_2$ and an $\epsilon_1$-accurate outer approximation of $\mathcal R^\nu$ with probability greater than $1 - \delta_1$, where
	\begin{alignp}
		\label{eq:probest-distance}
		\sup_x \inf_y \| x - y \|_2 < \nu
	\end{alignp}
	with $x \in \mathcal R_p$ and $y \in \mathcal R^\nu$.
\end{theorem}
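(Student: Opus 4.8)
The plan is to verify the three assertions in turn, reserving the bulk of the effort for the outer-approximation and distance claims, since the inner-approximation claim is already in hand. For the \emph{inner} approximation of $\mathcal R_p$ at level $(\epsilon_2,\delta_2)$ I would simply invoke Corollary~\ref{cor:probest-outer-accurate}, which fixes a suitable $N_2$; nothing below will disturb this choice. The remaining work concerns problem~\eqref{eq:probest-shape}, the learned polynomial $v(x)=\langle\Theta_N, Z_q^2\{x\}\rangle$, and the level $c^N$. First I would record two deterministic facts. Since $\phi$ is Lipschitz and $A$ bounded, each iterate $x_k$ is Lipschitz in the initial condition on the compact set $\mathcal D$, so $V_p=\sum_{k=0}^p\|x_k\|_2^2$ is Lipschitz and nonnegative there. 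A sum-of-squares form of Lemma~\ref{lem:weierstrass} then applies: approximating $\sqrt{V_p+\delta}$ by a polynomial $r$ and setting $\Theta^\ast=\theta_r\theta_r^{\mathrm T}\succeq 0$ yields, for any $\eta>0$, a degree $q$ and a feasible $\Theta^\ast$ with $\sup_{\mathcal D}|\langle\Theta^\ast,Z_q^2\{\cdot\}\rangle-V_p|\le\eta$. Because $V_p\ge c_p$ on $\mathcal D\setminus\mathcal R_p$, this single $\Theta^\ast$ makes both terms of the cost in \eqref{eq:probest-shape} at most $\eta$, so the deterministic optimum satisfies $\hat\eta\le\eta$; as the empirical maximum never exceeds the true supremum, $\eta^N\le\hat\eta\le\eta$ as well.

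Next I would fix the comparison set $\mathcal R^\nu=\{x : V_p(x)\le c_p-\mu\}$. As $\mu\downarrow 0$ these nested sets increase to $\mathcal R_p$; using continuity of $V_p$ and compactness of $\overline{\mathcal R_p}$ one shows $\sup_{x\in\mathcal R_p}\operatorname{dist}(x,\mathcal R^\nu)\to 0$, so a sufficiently small $\mu$ secures \eqref{eq:probest-distance}. To obtain the outer approximation I would run the scenario machinery on \eqref{eq:probest-shape}: by Lemma~\ref{lem:probopt-convergence} and Proposition~\ref{prop:probopt-multiple} there is $N_1$ for which, with probability at least $1-\delta_1$, the solution $\Theta_N$ obeys $\prob{\bar g(\Theta_N, z)\le\eta^N}\ge 1-\epsilon_1$. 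Separating the two components of $\bar g$ gives $|v(\breve x)-V_p(\breve x)|\le\eta^N$ on a $(1-\epsilon_1)$-mass of $\mathcal R_p$ and, via the identical-distribution Assumption, $v(\bar x)\ge c_p-\eta^N$ on a corresponding mass of $\mathcal D\setminus\mathcal R_p$. Together with a lower bound $c^N\ge c_p-\eta^N$, any $x\in\mathcal R^\nu$ in the good set satisfies $v(x)\le V_p(x)+\eta^N\le c_p-\mu+\eta^N<c^N$ whenever $\eta<\mu/2$, placing $x\in R_{N,q}$ and yielding $\prob{x\in R_{N,q}}\ge 1-\epsilon_1$.

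The principal obstacle is controlling $c^N$ from below. Because $c^N=\min_{\bar x}^{N_2}v(\bar x)$ is an empirical minimum, it is sensitive to rare initial conditions on which the \emph{extrapolated} polynomial dips below $c_p-\eta^N$, and a union bound over the $N_2$ samples degrades the guaranteed lower tail by a factor $N_2$. I would resolve this by ordering the choices: first fix $N_2$ from $(\epsilon_2,\delta_2)$, then enlarge $N_1$ so that \eqref{eq:probest-shape} is solved to the stronger accuracy $\min\{\epsilon_1,\delta_2/N_2\}$, which makes $\prob{v(\bar x)<c_p-\eta^N}$ small enough for $\prob{c^N<c_p-\eta^N}<\delta_1$ to survive the union bound; since all quantities are finite, this ordering is non-circular. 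A second delicate point is that the $(1-\epsilon_1)$-mass above is measured with respect to $\breve x$ on $\mathcal R_p$, whereas the outer-approximation test variable ranges over $\mathcal R^\nu$; transferring the bound requires rescaling the target accuracy by the positive mass of $\mathcal R^\nu$, again absorbed into a larger $N_1$.

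Finally, the continuity argument underlying \eqref{eq:probest-distance} must be handled with some care, since uniform convergence of the one-sided Hausdorff distance can fail if $c_p$ is a critical value producing flat boundary regions of $V_p$. Here I would exploit the radial lower bound $V_p(x)\ge\|x\|_2^2$ together with $V_p(0)=0$ to confine $\mathcal R_p$ to a bounded ball and to rule out such degeneracies, making the passage $\mu\downarrow 0$ rigorous. Collecting the choices of $q$ (from $\eta<\mu/2$), $N_1$ (accuracy and the $c^N$ lower tail), $N_2$ (Corollary~\ref{cor:probest-outer-accurate}), and $\mu$ (distance) then establishes all three claims simultaneously.
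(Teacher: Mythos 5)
Your proposal follows essentially the same route as the paper's proof: fix $N_2$ via Corollary~\ref{cor:probest-outer-accurate} for the inner approximation, use the Weierstrass/sum-of-squares argument (Proposition~\ref{prop:weierstrass-sos}) to drive $\eta^N$ below $\mu/2$, take $\mathcal R^\nu$ as the sublevel set $\{V_p \leq c_p-\mu\}$ via Proposition~\ref{prop:probest-inner-gap}, and combine the scenario accuracy of the polynomial fit (Lemma~\ref{lem:probest-inner-accurate}) with the lower bound $c^N \geq c_p - \eta^N$ (Lemma~\ref{lem:probest-outer-gap}) to obtain the chain $v(x)\leq V_p(x)+\eta^N \leq c_p-\mu+\eta^N < c^N$. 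The only deviations are bookkeeping: you control the lower tail of $c^N$ by a union bound with a strengthened first-stage accuracy where the paper uses the factor $(1-\epsilon_1)^{N_2}$ absorbed into $\delta'$, and you explicitly rescale for the change of reference measure from $\mathcal R_p$ to $\mathcal R^\nu$, a step the paper's proof leaves implicit.
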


If, as it remains to be proven, the optimal solutions to Problem~\ref{prob:probest} exist and are unique, it is clear from Lemmas~\ref{lem:probopt-convergence} and \ref{lem:weierstrass} that individually, accuracy of the inner and outer approximations and the goodness of the polynomial fit converge (in probability) if either $N_1 \to \infty$, $N_2 \to \infty$, or $q \to \infty$, respectively. In the remainder of the section we show that convergence (in probability) also holds simultaneously.
To that extent, we will prove:
\begin{enumerate}
	\item That the optimal solution $\Theta_N \in \arg \eta^N$ exists and is unique; that is, we can obtain a bound on the number of samples $N_1$ for given confidence levels $\epsilon$ and $\delta$.
	
	\item That, with increasing polynomial degree $q$, the difference between $c^N$ and $c_p$ converges (in probability) to zero; furthermore, that convergence of the level $\nu$ implies convergence of $\mathcal R^\nu$ in the Hausdorff metric.
	
	\item Combining all of the results, that $R_{N,q}$ converges (in probability) to $\mathcal R_p$ with increasing numbers of samples and polynomial degree.
\end{enumerate}

We start by proving that $R_{N,q}$ is a probabilistic outer approximation of a sublevel set $V_p^{-1}(c_p - \mu) \subset \mathcal R_p$ for some $\mu > 0$. 

\subsection{Existence of the Polynomial Approximation}
Prior to showing uniqueness, we prove that the optimal value $\hat \eta$ of the deterministic $\min \sup$-problem becomes arbitrarily small, provided that the polynomial $q$ is chosen sufficiently large. 
For the first result, we appeal to Lemma~\ref{lem:weierstrass} as well as the structure of $V_p$;
namely, recall that $V_p$ maps $x_0$ to the finite sum of squares of Lipschitz continuous functions 
\begin{align*}
	f_k: x_0 \mapsto x_k 
\end{align*}
where $k \in \{ 0, \ldots, p \}$ and $\{x_k\}_{k \geq 0}$ is the solution of $\Pi(x_0)$. 

\begin{proposition}
	\label{prop:weierstrass-sos}
	Let $\eta > 0$; there exists $q \in \mathbb N$ such that
	\begin{align}
		\label{eq:probest-weierstrass}
		\begin{aligned}
			\sup_{\breve x, \hat x} \max \left\{
			| \langle \Theta, Z^2_q\{\breve x\} \rangle - V_p(\breve x) |, \right. \quad& \\
			\left. (c_p - \langle \Theta, Z^2_q\{\hat x\} \rangle)
			\right\} &\leq \eta
		\end{aligned}
	\end{align}
	for some matrix $\Theta \in \mathbb S_{n_q}^+$.
\end{proposition}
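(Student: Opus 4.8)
The plan is to exploit the sum-of-squares structure of $V_p$ directly, so that the approximating polynomial is not merely close to $V_p$ but also carries a \emph{positive semidefinite} Gram representation, i.e.\ a matrix $\Theta \in \mathbb S_{n_q}^+$. Writing $f_k = (f_{k,1}, \ldots, f_{k,n})$ for the Lipschitz map $x_0 \mapsto x_k$, the point of departure is that
\[
	V_p(x) = \sum_{k=0}^p \sum_{i=1}^n f_{k,i}(x)^2
\]
is a finite sum of squares of scalar Lipschitz functions. On the compact set $\mathcal D$ each $f_{k,i}$ is bounded, say $|f_{k,i}(x)| \le F$ for all $x \in \mathcal D$, and Lemma~\ref{lem:weierstrass} applies componentwise.

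First I would fix an auxiliary tolerance $\eta' > 0$ (to be pinned down later) and apply Lemma~\ref{lem:weierstrass} to each of the finitely many components $f_{k,i}$ on $\mathcal D$, obtaining polynomials $g_{k,i}(x) = \langle \theta_{k,i}, Z_q\{x\} \rangle$ with $\sup_{x \in \mathcal D} |g_{k,i}(x) - f_{k,i}(x)| \le \eta'$; taking $q$ as the largest of the individual degrees and padding the $\theta_{k,i}$ with zeros lets a single monomial vector $Z_q\{x\}$ represent all of them. I then set
\[
	\Theta = \sum_{k=0}^p \sum_{i=1}^n \theta_{k,i}\, \theta_{k,i}^{\mathrm T} \in \mathbb S_{n_q}^+,
\]
which is positive semidefinite as a sum of rank-one terms, and define $w(x) = \langle \Theta, Z_q^2\{x\} \rangle = Z_q\{x\}^{\mathrm T} \Theta \, Z_q\{x\} = \sum_{k,i} g_{k,i}(x)^2$. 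This $\Theta$ is exactly the candidate required by the statement.

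Next I would bound the approximation error. Factoring $g_{k,i}^2 - f_{k,i}^2 = (g_{k,i}-f_{k,i})(g_{k,i}+f_{k,i})$ and using $|g_{k,i}+f_{k,i}| \le 2F + \eta'$ on $\mathcal D$ yields
\[
	\sup_{x \in \mathcal D} |w(x) - V_p(x)| \le (p+1)\, n\, \eta'\, (2F + \eta'),
\]
so choosing $\eta'$ small enough makes the right-hand side at most $\eta$. Since $\mathcal R_p \subset \mathcal D$, this settles the first argument of the maximum for $\breve x$. For the second argument, recall that $V_p(\hat x) \ge c_p$ holds for every $\hat x \in \mathcal D \setminus \mathcal R_p$; combined with $w(\hat x) \ge V_p(\hat x) - \eta$ this gives $c_p - w(\hat x) \le \eta$. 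As both supports lie in $\mathcal D$, the uniform bound over $\mathcal D$ covers the joint supremum over $(\breve x, \hat x)$.

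The step I expect to be the crux is the very first one: securing a positive semidefinite $\Theta$ rather than an arbitrary coefficient vector. A direct appeal to Lemma~\ref{lem:weierstrass} applied to $V_p$ itself would only produce a general polynomial, with no control over the sign of its Gram matrix; the resolution is to approximate the Lipschitz building blocks $f_{k,i}$ first and square afterwards, so that the sum-of-squares form --- and hence $\Theta \succeq 0$ --- is baked in by construction. The remaining estimates are routine, the only mild care being the uniform boundedness of the $f_{k,i}$ on the compact set $\mathcal D$, which underwrites the product bound above.
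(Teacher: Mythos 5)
Your proposal is correct and follows essentially the same route as the paper: approximate the Lipschitz trajectory maps $f_k$ by polynomials via Lemma~\ref{lem:weierstrass}, square them to obtain a Gram matrix $\Theta$ that is positive semidefinite by construction, bound the error through the factorization $g^2 - f^2 = (g-f)(g+f)$ on the compact set $\mathcal D$, and dispatch the second term of the maximum using $V_p \geq c_p$ on $\mathcal D \setminus \mathcal R_p$. The only cosmetic difference is that the paper represents the $k=0$ (identity) term exactly and works with matrix coefficients $\theta_k \in \mathbb R^{n_q \times n}$ rather than componentwise, which does not change the argument.
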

\begin{proof}
	Choose $\mu > 0$ with $\mu(\mu + 2 F) < \eta/p$, where $F$ is an upper bound of $\| f_1 \|_2, \ldots, \| f_p \|_2$ on $\mathcal D$.
	By Lemma~\ref{lem:weierstrass}, there exist $q \in \mathbb N$ and $\theta_1, \ldots, \theta_p \in \mathbb R^{n_q \times n}$ such that $\sup_{x \in \mathcal D} | \theta_k^{\mathrm T} Z_q\{x\} - f_k(x) | \leq \mu$ for all $k \in \{ 1, \ldots, p \}$. Then
	\begin{multline*}
		\Big| \sum_{k=1}^p \| \theta_k^{\mathrm T} Z_q\{x\} \|_2^2 - \| f_k(x) \|_2^2 \Big| \\ 
		\leq 
		\sum_{k=1}^p \Big| ( \theta_k^{\mathrm T} Z_q\{x\} - f_k(x) )^{\mathrm T} ( \theta_k^{\mathrm T} Z_q\{x\} + f_k(x) ) \Big| \\
		\leq \sum_{k=1}^p \| \theta_k^{\mathrm T} Z_q\{x\} - f_k(\breve x) \|_2 \cdot \| \theta_k^{\mathrm T} Z_q\{x\} + f_k(x) \|_2 \\ 
		\leq \mu \sum_{k=1}^p (2 \| f_k(x) \|_2 + \mu) \leq \eta
	\end{multline*}
	holds for all $x \in \mathcal D$. 
	Note that $V_p(x') \geq c_p$ if $x' \in \mathcal D \setminus \mathcal R_p$ by definition of $\mathcal R_p$.
	Let $\theta_0$ be the vector that gives $\theta_0^{\mathrm T} Z_q\{ x \} \equiv x$ and define $L_k = \theta_k^{} \theta_k^{\mathrm T}$. Then $L_0, L_1, \ldots, L_p \in \mathbb S_{n_q}^+$ and $\Theta = L_0 + \sum_{k=1}^p L_k$ is the desired result.
\end{proof}

We note an immediate result of Proposition~\ref{prop:weierstrass-sos} for the empirical optimization problem in \eqref{eq:probest-shape}.

\begin{lemma}
	\label{lem:probest-inner-exists}
	Let $\eta > 0$; there exists $q \in \mathbb N$ such that $\eta^N \leq \eta$ holds surely.
\end{lemma}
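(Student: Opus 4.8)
The plan is to exhibit a single positive semidefinite matrix that is feasible for the empirical minimization \eqref{eq:probest-shape} and whose objective value is bounded by $\eta$ for \emph{every} realization of the samples; since $\eta^N$ is obtained by minimizing over all $\Theta \in \mathbb S^+_{n_q}$, this immediately yields $\eta^N \leq \eta$ surely.

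First I would fix $\eta > 0$ and invoke Proposition~\ref{prop:weierstrass-sos} to obtain a degree $q \in \mathbb N$ and a matrix $\Theta^\star \in \mathbb S^+_{n_q}$ for which the deterministic bound \eqref{eq:probest-weierstrass} holds, i.e.\ the supremum over $\breve x \in \mathcal R_p$ and $\hat x \in \mathcal D \setminus \mathcal R_p$ of the relevant $\max\{\cdot,\cdot\}$ is at most $\eta$. This $\Theta^\star$ will serve as the feasible witness for the empirical problem.

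The key step is the elementary observation that the empirical maximum is always dominated by the true supremum. Each trajectory sample $\breve x^{(i)}$ takes values in the support $\mathcal R_p$ of $\breve x$, and each $\hat x^{(j)}$ takes values in the support $\mathcal D \setminus \mathcal R_p$ of $\hat x$; therefore every term appearing in $\max_{\breve x}^{N_1}$ and $\max_{\hat x}^{N_1}$ is bounded above by the corresponding supremum in \eqref{eq:probest-weierstrass}. Consequently, evaluating the objective of \eqref{eq:probest-shape} at $\Theta^\star$ returns a value no larger than $\sup_{\breve x, \hat x} \max\{\cdots\} \leq \eta$, and this bound holds pointwise on $\Omega$ rather than merely in probability. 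Since $\Theta^\star \in \mathbb S^+_{n_q}$ is an admissible choice in the minimization, the optimal value $\eta^N$ can only be smaller, so $\eta^N \leq \eta$ holds surely.

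The argument is essentially a routine domination of an empirical quantity by its deterministic counterpart, so there is no substantial obstacle; the only point demanding care is the qualifier ``surely.'' I would make explicit that the domination holds for \emph{every} sample realization — not just in expectation or with high probability — precisely because the sample supports coincide with the domains over which the supremum in Proposition~\ref{prop:weierstrass-sos} is taken. This matching of supports is what upgrades the statement from a probabilistic to an almost-sure (indeed, sure) one, and is the sole feature worth flagging.
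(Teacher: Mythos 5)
Your proposal is correct and is essentially identical to the paper's own proof: both invoke Proposition~\ref{prop:weierstrass-sos} to produce a feasible $\hat\Theta \in \mathbb S^+_{n_q}$ whose deterministic supremum is at most $\eta$, then dominate the empirical maximum by that supremum pointwise and conclude $\eta^N \leq \eta$ surely by suboptimality of $\hat\Theta$ in the minimization. The paper states this more tersely; your explicit remark about the sample supports matching the domains of the supremum is the (correct) justification it leaves implicit.
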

\begin{proof}
	Denote the cost function in \eqref{eq:probest-weierstrass} by $g(\Theta, x)$ with $x = (\breve x, \hat x)$.
	By Proposition~\ref{prop:weierstrass-sos}, there exist $q \in \mathbb N$ and $\hat \Theta \in \mathbb S^+_{n_q}$ 
	such that $\hat \eta = \sup_x g(\hat \Theta, x) \leq \eta$ and hence, $\eta^N \leq \max_x^{N_1} g(\hat \Theta, x) \leq \hat \eta$ 
	holds surely. 
\end{proof}

\subsection{Uniqueness of the Polynomial Approximation}
In order to apply Lemma~\ref{lem:probopt-convergence}, we need that the optimal solution $\Theta_N \in \arg \eta^N$ of the optimization problem in \eqref{eq:probest-shape} is unique. Here, we will assume without loss of generality that the polynomial regression is overdetermined. 
For the following result, note that $(\eta^N, \Theta_N)$ is the solution of a convex program (Alg.~\ref{alg:probest-inner}) of which the feasible set is the intersection of $K_1 = \mathbb R \times \mathbb S^+_{n_q}$ and $K_2 = \{ \xi \in \mathbb R^{n_{2q} + 1} \, | \, A \xi \geq b \}$, where the rows of $A \in \mathbb R^{3N_1 \times (n_{2q}+1)}$ and $b \in \mathbb R^{3N_1}$ are given as
\begin{subequations}
	\label{eq:probest-inner-constraints}
\begin{alignat}{2}
	\label{eq:probest-inner-constraints-lower}
	A_i^{\mathrm T} &= \begin{bmatrix} 1 & +Z_{2q}\{\breve x^{(i)}\} \end{bmatrix}, \quad & b_i &= +v^{(i)} \\
	\label{eq:probest-inner-constraints-upper}
	A_{N+i}^{\mathrm T} &= \begin{bmatrix} 1 & -Z_{2q}\{\breve x^{(i)}\} \end{bmatrix}, \quad & b_{N+i} &= -v^{(i)} \\
	\label{eq:probest-inner-constraints-exterior}
	A_{2N+j}^{\mathrm T} &= \begin{bmatrix} 1 & +Z_{2q}\{\hat x^{(j)}\} \end{bmatrix}, \quad & b_{2N+j} &= c_p
\end{alignat}
\end{subequations}
for all $i, j \in \{1, \ldots, N_1\}$. 
The objective can be written as $\eta = \langle c, \xi \rangle$ for a suitable linear form $c \in \mathbb R^{n_{2q}+1}$. 

\begin{remark}
	Some monomials of $x$ appear repeatedly in the square matrix $Z_q^2\{x\}$, thus adding some ambiguity to the solution $\Theta \in \mathbb S_{n_q}^+$. Since this is not reflected in the polynomial $\langle \Theta, Z_q^2\{x\}$, in \eqref{eq:probest-inner-constraints} we have (with some abuse of notation) used the vector $Z_{2q}\{x\}$ of same degree instead. 
\end{remark}

For some optimal solution $\bar \xi \in \arg \min_{\xi \in K_1 \cap K_2} \langle c, \xi \rangle$, we denote by $\bar A$ and $\bar b$ those constraints of \eqref{eq:probest-inner-constraints} that $\bar \xi$ satisfies with equality. Observe that any set containing $c$ and up to $n_{2q}$ linearly independent rows of $A$ is again linearly independent and, by \eqref{eq:probest-shape}, $\bar A$ includes at least rows with both positive and negative monomials; the corresponding samples $\breve x^{(i)}$ (resp., $\hat x^{(j)}$) with $i, j \in \{1, \ldots, N_1 \}$ are mutually exclusive; as well as $\langle c, \bar \xi \rangle \geq 0$. 

\begin{proposition}
	\label{prop:probest-inner-unique}
	The optimal solution $\Theta_N$ is unique if the associated matrix $\bar A$ has rank $n_{2q}+1$.
\end{proposition}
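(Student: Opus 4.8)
The plan is to treat Alg.~\ref{alg:probest-inner} as a conic program and establish uniqueness by the standard route: show that $\bar\xi$ is an isolated vertex pinned down by its active constraints, then rule out any competing minimizer by a complementary-slackness argument. First I would invoke the full-rank hypothesis: since $\bar A$ has rank $n_{2q}+1$, I can extract from its rows an invertible submatrix $\tilde A \in \mathbb R^{(n_{2q}+1)\times(n_{2q}+1)}$ with corresponding right-hand side $\tilde b$. The active equalities $\tilde A\,\bar\xi = \tilde b$ then force $\bar\xi = \tilde A^{-1}\tilde b$, so $\bar\xi$ is the only point of $K_1\cap K_2$ realizing this particular active set; in particular its $\Theta$-block $\Theta_N$ is determined.

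Next I would upgrade ``isolated vertex'' to ``unique global minimizer.'' Suppose $\xi'\neq\bar\xi$ were a second optimal solution and set $d=\xi'-\bar\xi$. Feasibility of $\xi'$ in $K_2$ gives $\bar A\,d\ge 0$, hence $\tilde A\,d\ge 0$, while equality of the objective gives $\langle c,d\rangle=0$. Writing $c=\tilde A^{\mathrm T}y$ with $y=\tilde A^{-\mathrm T}c$ uniquely defined, the nondegeneracy observation preceding the statement (that $c$ together with any $\le n_{2q}$ linearly independent rows of $A$ remains independent) shows every entry $y_k\neq 0$, since a vanishing $y_k$ would place $c$ in the span of the remaining $n_{2q}$ independent rows. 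Combining this with the sign information from optimality --- i.e.\ that the multipliers of the active linear constraints are nonnegative --- would yield $y>0$. Then $0=\langle c,d\rangle=\langle y,\tilde A\,d\rangle$ with $y>0$ and $\tilde A\,d\ge 0$ forces $\tilde A\,d=0$, and invertibility of $\tilde A$ gives $d=0$, i.e.\ $\xi'=\bar\xi$. As this pins down $\Theta_N$, the proposition follows, which is precisely the hypothesis Lemma~\ref{lem:probopt-convergence} needs in order to apply the scenario bound.

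The main obstacle is that Alg.~\ref{alg:probest-inner} is not a plain linear program but a semidefinite one, owing to the cone $K_1=\mathbb R\times\mathbb S^+_{n_q}$. Consequently the stationarity condition reads $c=A^{\mathrm T}\mu+(0,S)$ with $\mu\ge 0$ and a dual slack $S\in\mathbb S^+_{n_q}$ subject to $\langle S,\Theta_N\rangle=0$, so the conical span I need for $y>0$ is contaminated by the semidefinite dual term $S$. The crux is therefore to control $S$: I would either show the semidefinite constraint is inactive at the optimum, so that $S=0$ by complementary slackness and the multiplier argument runs verbatim, or, more robustly, exploit that each row's $\Theta$-part is a rank-one matrix $Z_q^2\{\cdot\}=Z_q\{\cdot\}\otimes Z_q\{\cdot\}\succeq 0$ together with the remark that $\bar A$ contains active rows of both the lower and upper type drawn from mutually exclusive samples. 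This sign structure, with $\eta^N=\langle c,\bar\xi\rangle\ge 0$, is what should prevent $S$ from absorbing a nonzero feasible direction and keep the effective multipliers strictly positive. Reconciling the semidefinite dual with the full-rank linear active set is the one step I expect to require genuine work.
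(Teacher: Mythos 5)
Your core argument is sound and is a genuine alternative to the paper's route for the linear-programming part of the problem. Where the paper invokes Mangasarian's perturbation criterion (uniqueness of $\bar\xi$ iff $c+\varepsilon$ stays in the inward normal cone $N^-_{K_2}(\bar\xi)$ for all small $\varepsilon$, verified by showing $\bar A$ is square with $c=\bar A^{\mathrm T}\lambda$, $\lambda>0$, and then perturbing $\lambda$), you run the classical strict-complementarity argument directly: $\bar A\,d\ge 0$, $\langle c,d\rangle=\langle y,\tilde A\,d\rangle=0$ with $y>0$ forces $\tilde A\,d=0$ and hence $d=0$. Both proofs rest on exactly the same two facts --- that the active matrix is square and invertible (so the multiplier vector is uniquely determined) and that the nondegeneracy observation (any $\le n_{2q}$ rows of $A$ together with $c$ are linearly independent) rules out a vanishing multiplier entry. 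Your version is arguably more elementary, since it avoids the external citation; the paper's version buys a slightly cleaner statement of \emph{why} full rank is the right hypothesis (it is what makes the normal cone full-dimensional and hence robust to perturbations of $c$).

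The gap you flag at the end, however, is real and is precisely the step the paper has to address: the program is conic, not linear, because of $K_1=\mathbb R\times\mathbb S^+_{n_q}$, so stationarity reads $c=\bar A^{\mathrm T}\lambda+(0,S)$ with a dual slack $S\succeq 0$, and your identification $y=\tilde A^{-\mathrm T}c$ with the nonnegative multipliers breaks down unless $S=0$. The paper resolves this by the first of your two proposed options, via a case split: if $\bar\xi$ lies in the interior of $K_1$, then $N^-_{K_1\cap K_2}(\bar\xi)=N^-_{K_2}(\bar\xi)$, the semidefinite constraint contributes nothing, and the LP argument applies verbatim; if $\bar\xi$ lies on the boundary of $K_1$, the paper disposes of the case with a one-line observation (that $\langle c,\xi\rangle$ is unbounded on $K_1$ alone, so the semidefinite face cannot be responsible for non-uniqueness) rather than the rank-one analysis of $Z_q^2\{\cdot\}$ you sketch as your second, ``more robust'' option. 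So your proposal is not wrong, but it is incomplete exactly where you say it is; to finish it along the paper's lines you need only add the interior/boundary dichotomy for $K_1$ and argue that in the interior case $S=0$ by complementary slackness, which is the situation the genericity of the samples (Corollary~\ref{cor:probest-inner-unique}) is meant to deliver.
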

\begin{proof}
	We assume without loss of generality that $\bar A$ has at most $n_{2q}+1$ rows. 
	Let $\bar \xi \in \arg \min_{\xi \in K_1 \cap K_2} \langle c, \bar \xi \rangle$; that is, $\langle c, \bar \xi \rangle \leq \langle c, \xi \rangle$ for all $\xi \in K_1 \cap K_2$, which is just the definition for $c$ to be element of the {\em inwards} normal cone, $N^-_{K_1 \cap K_2}(\bar \xi)$, of $K_1 \cap K_2$ at $\bar \xi$. We consider first the case that $\bar \xi$ lies in the interior of $K_1$, that is, $\bar \xi$ is the solution to the linear program $\min_{\xi \in K_2} \langle c, \xi \rangle$ and $N^-_{K_1 \cap K_2}(\bar \xi) = N^-_{K_2}(\bar \xi)$. 
	Now, $\bar \xi$ is unique if (and only if) $c + \varepsilon \in -N_{K_2}(\bar \xi)$ for any $\varepsilon \in \mathbb R^{n_{2q}+1}$ that is small enough \cite[Theorem~1]{Mangasarian1979}.
	
	Observe that the cone $N^-_{K_2}(\bar \xi) \subset \mathbb R^{n_{2q}+1}$ is spanned\footnote{A cone $C$ is spanned by the set of vectors $\{ a_1, ..., a_\ell \}$ if and only if all elements $y \in C$ satisfy $y = \lambda_1 a_1 + \cdots + \lambda_\ell a_\ell$ with $\lambda_1, \ldots, \lambda_\ell \geq 0$.} by the rows of $\bar A$.\footnote{See Appendix~\ref{app:conespan} for a proof.} 
	Since any set containing $c$ and (at most) $n_{2q}$ rows of $\bar A$ is linearly independent, we now claim that $\bar A$ is square; for otherwise, $c \in N^-_{K_2}(\bar \xi)$ could be written as linear combination of $n_{2q}$ rows of $\bar A$, which contradicts the linear independence. With the same argument, we conclude that $c = \bar A^{\mathrm T} \lambda$ with $\lambda \in \mathbb R_{> 0}^{n_{2q}+1}$. As $\bar A$ is square and full rank, its range is equal to $\mathbb R^{n_{2q}+1}$. Hence, if $\| \varepsilon \|_2$ is small enough, there exists $\kappa \in \mathbb R^{n_{2q}+1}$ satisfying $\bar A^{\mathrm T} \kappa = \varepsilon$ and $\lambda + \kappa \in \mathbb R_{\geq 0}^{n_{2q}+1}$; in other words, $c + \varepsilon \in N^-_{K_2}(\bar \xi)$.
	
	To complete the proof, assume that $\bar \xi$ lays on the boundary of $K_1$; but $\langle c, \xi \rangle$ is unbounded on $\xi \in K_1$ and hence, uniqueness of $\bar \xi$ is not affected.
\end{proof}
%

Uniqueness of $\Theta^N$ then is given, except for some events of measure zero, if the number of samples is large enough. 

\begin{corollary}
	\label{cor:probest-inner-unique}
	If $N_1 \geq n_{2q}$, the optimal solution $\Theta_N$ is almost surely unique.
\end{corollary}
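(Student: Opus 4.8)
The plan is to reduce the statement to the rank hypothesis of Proposition~\ref{prop:probest-inner-unique} and then establish that hypothesis as a generic (measure-theoretic) property of the samples. Concretely, it suffices to show that, whenever $N_1 \geq n_{2q}$, the active-constraint matrix $\bar A$ has rank $n_{2q}+1$ with probability one. I would work in the generic regime in which $\bar\xi$ lies in the interior of the cone $K_1 = \mathbb R \times \mathbb S^+_{n_q}$, so that $\bar\xi$ is an optimizer of the linear program $\min_{\xi \in K_2}\langle c, \xi\rangle$ over the polyhedron $K_2 = \{\xi \mid A\xi \geq b\}$; the boundary case is already dispatched in the proof of Proposition~\ref{prop:probest-inner-unique} by unboundedness of the objective on $K_1$.

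The core of the argument is the general-position property already recorded before Proposition~\ref{prop:probest-inner-unique}: any collection of $n_{2q}$ rows of $A$ together with $c$ is linearly independent, hence any $n_{2q}+1$ of the augmented rows $[\,1 \ \pm Z_{2q}\{x^{(k)}\}\,]$ are linearly independent. I would prove this by observing that linear dependence of a \emph{fixed} selection of $n_{2q}+1$ such rows is equivalent to the vanishing of the associated $(n_{2q}+1)\times(n_{2q}+1)$ determinant, which is a polynomial in the sample coordinates. This polynomial is not identically zero, since nonsingularity of the multivariate moment (Vandermonde-type) matrix of the monomial basis $Z_{2q}\{\cdot\}$ exhibits at least one configuration of distinct points for which the determinant is nonzero. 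Its zero set is therefore a proper algebraic variety of Lebesgue measure zero, and a finite union over all selections remains measure zero; under (absolutely continuous) sampling distributions this bad event has probability zero.

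With general position in hand, the count $N_1 \geq n_{2q}$ closes the argument. Almost surely the $n_{2q}$ interior monomial vectors $Z_{2q}\{\breve x^{(1)}\}, \ldots, Z_{2q}\{\breve x^{(n_{2q})}\}$ are linearly independent and span $\mathbb R^{n_{2q}}$, so that together with the all-ones column the rows of $A$ span $\mathbb R^{n_{2q}+1}$. Consequently $K_2$ is pointed, the bounded program attains its optimum at a vertex $\bar\xi$, and at a vertex in dimension $n_{2q}+1$ at least $n_{2q}+1$ constraints are active. By the mutual exclusivity noted before the proposition (each interior sample contributes at most one of its lower/upper constraints, and interior and exterior samples are distinct), these active rows correspond to distinct samples; general position then forces any $n_{2q}+1$ of them to be linearly independent, i.e. $\operatorname{rank}(\bar A) = n_{2q}+1$. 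Invoking Proposition~\ref{prop:probest-inner-unique} yields almost-sure uniqueness of $\Theta_N$.

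The hard part will be the general-position step, specifically verifying that the multivariate monomial moment matrix is generically nonsingular---that distinct generic points in $\mathbb R^n$ yield linearly independent vectors $Z_{2q}\{\cdot\}$---and tracking how the sign pattern ($+Z_{2q}$ versus $-Z_{2q}$) interacts with the all-ones column. It is also here that one must make explicit the implicit requirement that the sampling distributions be absolutely continuous with respect to Lebesgue measure; without it (e.g.\ for an atomic distribution) the measure-zero bad set could carry positive probability and ``almost surely'' would fail.
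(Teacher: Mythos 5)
Your overall strategy matches the paper's: reduce to the rank hypothesis of Proposition~\ref{prop:probest-inner-unique} and establish it as an almost-sure (generic) property of the samples via a Vandermonde-type independence argument; your explicit remark that the sampling distribution must not charge algebraic hypersurfaces is a fair point that the paper leaves implicit. However, there is a genuine gap in your general-position step. You claim that \emph{any} $n_{2q}+1$ augmented rows $[\,1 \ \pm Z_{2q}\{x^{(k)}\}\,]$ are linearly independent almost surely, to be proven by showing the corresponding determinant is a not-identically-zero polynomial in the sample coordinates. This is false for same-sign selections: since $Z_{2q}\{\cdot\}$ contains the constant monomial, every row of type \eqref{eq:probest-inner-constraints-lower} or \eqref{eq:probest-inner-constraints-exterior} reads $[\,1,\,1,\,\ast,\ldots\,]$, so if all $n_{2q}+1$ selected rows carry the sign $+$ (or all carry $-$), the $\eta$-column and the constant-monomial column are proportional and the determinant vanishes \emph{identically}. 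Your determinant argument therefore cannot close the step you yourself flag as ``the hard part''; the ``hence'' deducing independence of $n_{2q}+1$ augmented rows from independence of $c$ together with $n_{2q}$ rows is a non sequitur.

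What rescues the argument --- and what the paper uses --- is the structural fact, recorded just before Proposition~\ref{prop:probest-inner-unique}, that the active set $\bar A$ at an optimizer necessarily contains rows of \emph{both} signs (otherwise one could shift the constant coefficient of $\Theta$ and strictly decrease $\eta$). The paper then normalizes signs row-wise so that the constant-monomial column becomes all ones while the $\eta$-column acquires mixed signs, seeds the induction of Proposition~\ref{prop:vandermonde} with the nonsingular block $\bigl[\begin{smallmatrix} 1 & 1 \\ -1 & 1 \end{smallmatrix}\bigr]$, and concludes $\operatorname{rank}\bar A = n_{2q}+1$ almost surely. To repair your proof you must either import that both-signs observation before running your determinant argument (restricted to mixed-sign selections), or lean entirely on your LP-vertex route, where rank $n_{2q}+1$ of the active constraints holds by definition of a vertex once pointedness of $K_2$ is established; as written, though, your rank conclusion is derived from the false blanket independence claim, so the proof does not go through.
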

\begin{proof}
	Any set of up to $n_{2q}$ monomial vectors $z_1^{(i)}$ (resp., $z_2^{(j)}$) with $i, j \in \{1, \ldots, N_1 \}$ is linearly independent with probability one (see Appendix~\ref{app:vandermonde}); thus, any set of $c$ and up to $n_{2q}$ rows of $A$ is linearly independent and $\bar A$ has at least $n_{2q}+1$ rows.
	Obtain the matrix $\Lambda$ from $\bar A$ by multiplying each row that has $-z_1^{(i)}$ by $-1$ (such that the second column of $\Lambda$ is all $1$) as well as reordering its rows until the top-left $2 \times 2$ block reads
	\begin{align*}
		\Lambda^{[2]} = \begin{bmatrix} 1 & 1 \\ -1 & 1 \end{bmatrix}
	\end{align*}
	and note that $\Lambda$ and $\bar A$ are of equal rank. Then one applies the induction in the proof of Proposition~\ref{prop:vandermonde} (appendix), starting with the nonsingular block $\Lambda^{[2]}$, to verify that $\Lambda$ has almost surely full rank.
	That is, $\bar A$ has rank $n_{2q}+1$ and uniqueness follows by virtue of Proposition~\ref{prop:probest-inner-unique}.
\end{proof}

In the remainder of this section we will now conclude that the empirical polynomial approximation converges (in probability) to $V_p$.

\subsection{Accuracy of the Polynomial Approximation}
Having established existence and uniqueness, we obtain that the polynomial fit $\Theta_N$ is probabilistically accurate. We then proceed to show that the empirical bound $c^N$ does not add unnecessary conservatism.
Furthermore, the sublevel set can be chosen arbitrarily close to $\mathcal R_p$, where we assume that the distance $\nu$ is much smaller than $\sqrt{c_p}$.

\begin{lemma}
	\label{lem:probest-inner-accurate}
	Let $\epsilon, \delta, \eta > 0$; there exist $N_1, q \in \mathbb N$ such that $(\Theta_N, \eta^N)$ satisfy
	\begin{alignp}
		\label{eq:probest-inner-accurate}
		\left.
		\begin{aligned}
		&\prob { \sup_\epsilon | \langle \Theta_N, Z^2_q\{ \breve x \} \rangle - V_p(\breve x) | \leq \eta^N } \\
		&\prob {\sup_\epsilon (c_p - \langle \Theta_N, Z^2_q\{ \hat x\} \rangle) \leq \eta^N}
		\end{aligned}
		\right\} \geq 1 - \delta
	\end{alignp}
	and $\eta^N \leq \eta$.
\end{lemma}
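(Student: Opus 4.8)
The plan is to read the empirical problem \eqref{eq:probest-shape} as an instance of the multiple-variable scenario problem \eqref{eq:probopt-multiple} and then invoke Proposition~\ref{prop:probopt-multiple}. First I would identify the two component cost functions $g_1(\Theta, \breve x) = | \langle \Theta, Z^2_q\{ \breve x \} \rangle - V_p(\breve x) |$ and $g_2(\Theta, \hat x) = c_p - \langle \Theta, Z^2_q\{ \hat x \} \rangle$, so that $\eta^N$ plays the role of $\gamma^N$ and $\Theta_N$ that of $\beta_N$ in \eqref{eq:probopt}. Each $g_i$ is convex in $\Theta$, since $g_1$ is the absolute value of a form that is affine in $\Theta$ and $g_2$ is itself affine in $\Theta$; hence the extended cost $\bar g = \max\{g_1, g_2\}$ is convex and the hypotheses of Proposition~\ref{prop:probopt-multiple} on the cost are met. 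Its conclusion $\prob{\gamma^N < \sup_{z_i}^\epsilon g_i(\beta_N, z_i)} < \delta$ complements directly to $\prob{\sup_{z_i}^\epsilon g_i(\Theta_N, z_i) \leq \eta^N} \geq 1-\delta$, which are exactly the two lines of \eqref{eq:probest-inner-accurate}.

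Before applying the proposition I must discharge its three structural hypotheses. Convexity is immediate as above. Uniqueness of $\Theta_N$ is precisely Corollary~\ref{cor:probest-inner-unique}, which holds almost surely once $N_1 \geq n_{2q}$, so I would impose this lower bound on $N_1$ from the outset. Compactness is the one point needing care, as $\mathbb S^+_{n_q}$ is convex but unbounded. I would handle it by restricting the feasible set to the compact convex intersection $\mathbb S^+_{n_q} \cap \{ \Theta \mid \langle \Theta, \Theta \rangle \leq M \}$ with $M$ large enough that the almost surely unique optimizer lies in its interior; such an $M$ exists because the full-rank structure of $\bar A$ established ahead of Corollary~\ref{cor:probest-inner-unique} pins $\Theta_N$ down to a bounded value, so the truncation leaves the optimum and $\eta^N$ unchanged. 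I would also note that $\bar g \geq 0$, so $\eta^N$ is a well-defined random variable regardless of the truncation.

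The selection of parameters then proceeds in a fixed order. I would first fix $q$ via Lemma~\ref{lem:probest-inner-exists}, which furnishes $q \in \mathbb N$ with $\eta^N \leq \eta$ holding surely; this settles the final claim of the lemma and, at the same time, fixes the effective parameter dimension $n_\Theta = n_{2q}$ (using, as in the remark after Proposition~\ref{prop:probest-inner-unique}, the coefficient vector indexed by $Z_{2q}\{\cdot\}$ to resolve the repeated monomials in $Z^2_q$). With $q$, and hence $n_{2q}$, fixed I would choose $N_1$ to satisfy simultaneously $N_1 \geq n_{2q}$ for uniqueness and the sample-complexity bound \eqref{eq:probopt-bound}, i.e. $\epsilon N_1 \geq \frac{e}{e-1}(\log \delta^{-1} + n_{2q})$. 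Proposition~\ref{prop:probopt-multiple} then delivers \eqref{eq:probest-inner-accurate}.

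The main obstacle I anticipate is the compactness reduction: making precise that truncating $\mathbb S^+_{n_q}$ to a bounded set does not move the optimizer relies entirely on the uniqueness and full-rank argument feeding Corollary~\ref{cor:probest-inner-unique}, and it is the only step where the abstract scenario machinery does not apply verbatim. The remainder is bookkeeping, namely checking convexity, matching the notation of \eqref{eq:probest-shape} with that of \eqref{eq:probopt-multiple}, and ordering the choices of $q$ and $N_1$ so that the dimension-dependent bound \eqref{eq:probopt-bound} is evaluated at a fixed $q$.
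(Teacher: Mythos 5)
Your proposal is correct and follows essentially the same route as the paper's proof: fix $q$ via Lemma~\ref{lem:probest-inner-exists} so that $\eta^N \leq \eta$ holds surely, choose $N_1 \geq \max\{n_{2q},\, \epsilon^{-1}(\log \delta^{-1} + n_{2q})(1-e^{-1})^{-1}\}$ so that Corollary~\ref{cor:probest-inner-unique} gives almost-sure uniqueness of $\Theta_N$, and then apply Proposition~\ref{prop:probopt-multiple}. Your additional truncation argument for the non-compactness of $\mathbb S^+_{n_q}$ addresses a hypothesis of Proposition~\ref{prop:probopt-multiple} that the paper's own two-line proof silently passes over, so it is a welcome (if not strictly demanded by the paper's level of rigor) refinement rather than a divergence.
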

\begin{proof}
	By Lemma~\ref{lem:probest-inner-exists}, there exists $q \in \mathbb N$ such that $\eta^N \leq \eta$. Choosing $N_1 \geq \max \{ n_{2q}, \epsilon^{-1} (\log \delta^{-1} + n_{2q}) (1-e^{-1})^{-1} \}$, the optimal solution $\Theta_N$ is unique with probability one (Corollary~\ref{cor:probest-inner-unique}) and thus, the desired results follow from Proposition~\ref{prop:probopt-multiple}.
\end{proof}

Since the empirical polynomial approximation is $\epsilon$-accurate (with reliability $\delta > 0$), we obtain a probabilistic bound on the difference between $c^N$ and $c_p$. Here, we make use of the fact that $\hat x$ and $\bar x$ are equally distributed. 

\begin{lemma}
	\label{lem:probest-outer-gap}
	Let $\epsilon > 0$; 
	provided that \eqref{eq:probest-inner-accurate} holds, $c^N$ satisfies
	\begin{alignp}
		\label{eq:probest-outer-gap}
		c^N < c_p - \eta^N
	\end{alignp}
	with probability less than $1 - (1 - \epsilon)^{N_2}$.
\end{lemma}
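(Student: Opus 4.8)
The plan is to establish the probabilistic bound \eqref{eq:probest-outer-gap} by connecting the empirical outer level $c^N$ from \eqref{eq:probest-outer} to the polynomial accuracy guarantee \eqref{eq:probest-inner-accurate} that holds on the exterior samples $\hat x$. The key observation is that $c^N = \min_{\bar x}^{N_2} \langle \Theta_N, Z_q^2\{\bar x\}\rangle$ is the empirical \emph{minimum} over $N_2$ samples of $\bar x$, and by the standing assumption $\hat x$ and $\bar x$ are identically distributed. Therefore the second line of \eqref{eq:probest-inner-accurate} --- which says the probabilistic supremum of $c_p - \langle \Theta_N, Z_q^2\{\hat x\}\rangle$ is at most $\eta^N$ --- transfers directly to $\bar x$. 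Rearranged, this states that a single sample $\bar x$ satisfies $\langle \Theta_N, Z_q^2\{\bar x\}\rangle \geq c_p - \eta^N$ with probability at least $1 - \epsilon$.

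First I would fix $\Theta_N$ (conditioning on the event that \eqref{eq:probest-inner-accurate} holds, as hypothesized) and note that the event $c^N < c_p - \eta^N$ occurs if and only if \emph{at least one} of the $N_2$ independent samples $\bar x^{(j)}$ yields $\langle \Theta_N, Z_q^2\{\bar x^{(j)}\}\rangle < c_p - \eta^N$, since the minimum falls below the threshold exactly when some term does. Next I would invoke independence of the $N_2$ draws: the probability that a single draw lands \emph{above} the threshold is at least $1-\epsilon$ by the transferred guarantee, so the probability that \emph{all} $N_2$ draws land above is at least $(1-\epsilon)^{N_2}$. Taking complements, the probability that at least one draw falls below --- equivalently, that $c^N < c_p - \eta^N$ --- is at most $1 - (1-\epsilon)^{N_2}$, which is the claimed bound.

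The main obstacle I anticipate is getting the direction of the inequality and the role of the identical-distribution assumption exactly right. The accuracy result \eqref{eq:probest-inner-accurate} is stated for $\hat x$, the samples used in the \emph{first} optimization \eqref{eq:probest-shape}, whereas $c^N$ is computed from $\bar x$, the samples in the \emph{second} step; the transfer is legitimate only because the two random variables share a distribution and because $\Theta_N$ is held fixed (already determined by the first-stage samples, hence independent of the fresh $\bar x$ draws). One must be careful that the probabilistic supremum bound gives a per-sample lower bound of the form $\langle \Theta_N, Z_q^2\{\bar x\}\rangle \geq c_p - \eta^N$ with confidence $1-\epsilon$, not an almost-sure one; the surely-true fact $V_p(\bar x) \geq c_p$ is the deterministic anchor, but the polynomial value can undershoot $c_p$ by up to the approximation error $\eta^N$, which is precisely why the threshold is $c_p - \eta^N$ rather than $c_p$. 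The remainder is the routine independence argument above.
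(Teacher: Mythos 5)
Your proposal is correct and follows essentially the same route as the paper: transfer the per-sample guarantee $\langle \Theta_N, Z_q^2\{\bar x\}\rangle \geq c_p - \eta^N$ (holding with probability $1-\epsilon$) from $\hat x$ to $\bar x$ via the identical-distribution assumption, then use independence of the $N_2$ fresh draws to bound the probability that the empirical minimum $c^N$ falls below the threshold by $1-(1-\epsilon)^{N_2}$. If anything, your write-up is cleaner than the paper's, whose displayed inequality has the sign of $c_p - \langle \Theta_N, Z_q^2\{\bar x^{(j)}\}\rangle$ flipped and whose closing step is stated somewhat loosely.
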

\begin{proof}
	Assume \eqref{eq:probest-inner-accurate} holds; $\eta^N$ and $\Theta_N$ then satisfy
	\begin{align*}
		\prob {\langle \Theta_N, Z_q^2\{\bar x^{(j)}\} \rangle - c_p \leq \eta^N, \; j \in \{1, \ldots, N\}} \geq (1-\epsilon)^N
	\end{align*}
	as well. 
	As $c^N \leq \langle \Theta_N, Z_q^2\{\bar x^{(j)}\} \rangle$ holds surely for any $j \in \{1, \ldots, N\}$, this completes the proof.
\end{proof}

Before proving our main theorem, we derive a bound on the Hausdorff distance\footnote{The Hausdorff distance of $A, B \subset \mathbb R^n$ simplifies to \[ \sup_{x \in B} \operatorname{dist}(x; A) \] if $A \subseteq B$.} between $\mathcal R^\nu$ and $\mathcal R_p$ based on the sublevel $c_p - \mu > 0$.

\begin{proposition}
	\label{prop:probest-inner-gap}
	Let $\nu > 0$; there exists $\mu > 0$ such that
	\begin{alignp}
		\label{eq:probest-inner-gap}
		\inf_y \| x - y \|_2 < \nu
	\end{alignp}
	with $y \in V_p^{-1}(c_p - \mu)$ for all $x \in \mathcal R_p$.
\end{proposition}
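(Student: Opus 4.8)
The plan is to exploit continuity and compactness of $V_p$ rather than any scaling of the form $x \mapsto \lambda x$, which would require a radial-monotonicity property of $V_p$ that Assumption~\ref{ass:regularity} does not supply (the nonlinearity $\phi$ need not preserve $V_p(\lambda x) \le V_p(x)$). First I would record the two structural facts that drive everything: since each $f_k$ is Lipschitz, $V_p$ is continuous; and since $V_p(x) \ge \| x \|_2^2$, the set $\mathcal R_p = \{ x \mid V_p(x) < c_p \}$ is bounded, so its closure lies in the compact sublevel set $\{ x \mid V_p(x) \le c_p \}$. Writing $S(c) = \{ x \in \mathbb R^n \mid V_p(x) \le c \}$, the sets $S(c)$ are compact, nested increasingly in $c$, and satisfy $\bigcup_{c < c_p} S(c) = \mathcal R_p$, with $S(c_p - \mu) = V_p^{-1}(c_p - \mu)$.

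Reformulating the claim, it suffices to prove that $D(c) = \sup_{x \in \mathcal R_p} \operatorname{dist}(x; S(c))$ tends to $0$ as $c \to c_p^-$; then for the given $\nu$ I pick $\mu > 0$ with $D(c_p - \mu) < \nu$. As $D$ is nonincreasing, I would argue by contradiction and suppose $D^\star = \lim_{c \to c_p^-} D(c) > 0$. Choosing a sequence $c_m \uparrow c_p$, there exist $x_m \in \mathcal R_p$ with $\operatorname{dist}(x_m; S(c_m)) > D^\star/2$; in particular $x_m \notin S(c_m)$, so $c_m < V_p(x_m) < c_p$. By boundedness of $\mathcal R_p$ a subsequence of $\{ x_m \}$ converges to some $x^\star$, and continuity of $V_p$ forces $V_p(x^\star) = c_p$.

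The crux is to promote this uniform separation to a statement on a whole neighbourhood of $x^\star$. Using monotonicity of $S(\cdot)$, for any fixed index $m_0$ and all $m > m_0$ one has $S(c_{m_0}) \subseteq S(c_m)$ and hence $\operatorname{dist}(x_m; S(c_{m_0})) \ge \operatorname{dist}(x_m; S(c_m)) > D^\star/2$; letting $m \to \infty$ gives $\operatorname{dist}(x^\star; S(c_{m_0})) \ge D^\star/2$. Since this holds along $c_{m_0} \uparrow c_p$, every point $z$ with $\| z - x^\star \|_2 < D^\star/2$ avoids $S(c_{m_0})$ for all $m_0$, i.e. satisfies $V_p(z) > c_{m_0}$, whence $V_p(z) \ge c_p$ throughout that open ball. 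This is the sought contradiction: the sequence $x_m \to x^\star$ eventually lies in the ball of radius $D^\star/2$ about $x^\star$, yet $V_p(x_m) < c_p$. Therefore $D^\star = 0$, and the bound \eqref{eq:probest-inner-gap} follows (with $\mathcal R^\nu = V_p^{-1}(c_p - \mu)$).

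I expect the boundary level set $\{ V_p = c_p \}$ to be the only genuine obstacle: a naive estimate stumbles on the possibility that $\mathcal R_p$ abuts a ``plateau'' of $V_p$ at height $c_p$, where reaching a strictly lower level might demand a large displacement. The monotonicity-plus-compactness step above is exactly what excludes this, because $x^\star$ being a limit of strict-sublevel points is incompatible with $V_p \ge c_p$ on an entire ball around it. Finally, I would note that the standing assumption $\nu \ll \sqrt{c_p}$ serves only to keep the resulting level $c_p - \mu$ positive and the approximating sublevel set $V_p^{-1}(c_p - \mu)$ nonempty; it plays no role in the limiting argument itself.
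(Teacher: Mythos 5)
Your proof is correct, but it takes a genuinely different route from the paper's. The paper argues quantitatively: from Lipschitz continuity of the flow maps and boundedness of $\mathcal R_p$ it sandwiches $\| x \|_2^2 \leq V_p(x) \leq \ell \| x \|_2^2$ on $\mathcal R_p$, picks $\mu < c_p - \ell^2(\sqrt{c_p} - \nu)^2$ explicitly, and compares the outer radius $r_1 < \sqrt{c_p}$ of $\mathcal R_p$ with the inner radius $r_2 \geq \ell^{-1}\sqrt{c_p - \mu}$ of the level set, so that the displacement needed to reach level $c_p-\mu$ (found on the segment from the origin to $x$ via the intermediate value theorem) is at most $r_1 - r_2 < \nu$. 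So the paper does reason radially, but through the two-sided norm bound rather than the radial monotonicity you correctly identify as unavailable. Your compactness-and-contradiction argument, showing $\sup_{x \in \mathcal R_p} \operatorname{dist}(x; S(c)) \to 0$ as $c \to c_p^-$, is sound (the promotion of $\operatorname{dist}(x^\star; S(c_{m_0})) \geq D^\star/2$ to $V_p \geq c_p$ on a ball, contradicting $x_m \to x^\star$ with $V_p(x_m) < c_p$, is exactly the right way to kill the plateau pathology) and is more general: it needs only continuity of $V_p$ and boundedness of the strict sublevel set, not the upper quadratic bound. What it gives up is constructiveness: it produces no explicit $\mu(\nu)$, whereas the paper's closed-form choice of $\mu$ is what makes the level $c_p - \mu$, and hence the set $\mathcal R^\nu$ referenced in the numerical accuracy figures, actually computable.
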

\begin{proof}
	Since $\phi$ is Lipschitz continuous and $\mathcal R_p$ is bounded, there exists $\ell \geq 1$ such that
	\begin{align*}
		\| x \|_2^2 \leq V_p(x) \leq \ell \| x \|_2^2
	\end{align*}
	for all $x \in \mathcal R_p$. Choose $\mu < c_p - \ell^2(c_p - 2\nu \sqrt{c_p} + \nu^2)$ and denote $r_1 = \sup_x \| x \|_2$ and $r_2 = \inf_y \| y \|_2$; then
	$\sqrt {c_p} > r_1 > r_2 \geq \ell^{-1} \sqrt{c_p - \mu}$. Hence, $\inf_y \| x - y \|_2 \leq r_1 - r_2 < \nu$ 
	is satisfied, the desired result.
\end{proof}

\subsection{Convergence of the Polynomial Approximation}
We conclude the theoretical analysis by proving that $R_{N,q}$ converges (in probability) to an inner approximation of $\mathcal R_p$ and an outer approximation of $\mathcal R^\nu$, where the sublevel set $\mathcal R^\nu$ converges (in the Hausdorff metric) to $\mathcal R_p$.

\subsubsection*{Proof of Theorem~\ref{thm:probest}}
	By Lemma~\ref{lem:probest-outer-accurate}, there exists $N_2 \in \mathbb N$ such that \eqref{eq:probest-outer-accurate}, that is, $R_{N,q}$ satisfies
	\begin{alignp}
		\label{eq:probest-outer}
		\prob {\bar x \in R_{N,q}} < \epsilon_2
	\end{alignp}
	with probability greater than $1-\delta_2$. Recalling that $\bar x \not\in \mathcal R_p$, \eqref{eq:probest-outer} is the definition of an $\epsilon_2$-accurate inner approximation of $\mathcal R_p$. Let now $\mathcal R^\nu$ be the largest set $R \subset \mathcal R_p$ such that $y \in R$ if $V_p(y) \leq c_p - \mu$ with probability one, $x \in \mathcal R_p$, and $y \in \mathcal R^\nu$, where $\mu > 0$ is chosen according to Proposition~\ref{prop:probest-inner-gap} such that \eqref{eq:probest-inner-gap} holds, implying that \eqref{eq:probest-distance} is satisfied. 
	
	Choose $\eta \leq \mu/2$; assume that 
	$\eta^N \leq \eta$. If $y \in \mathcal R^\nu$, then
	\begin{alignp}
		\label{eq:probest-inner}
	\begin{aligned}
		\prob {y \not\in R_{N,q}} 
		&\leq \prob {\langle \Theta_N, Z^2_q\{y\} \rangle - V_p(y) > c^N - (c_p - \mu)} \\
		&\leq \prob {\langle \Theta_N, Z^2_q\{y\} \rangle - V_p(y) > \mu - \eta} \\
		&\leq \prob {| \langle \Theta_N, Z^2_q\{y\} \rangle - V_p(y) | > \eta^N} \leq \epsilon_1
	\end{aligned}
	\end{alignp}
	where the first inequality follows from the definitions of $R_{N,q}$ and $\mathcal R^\nu$, the second from \eqref{eq:probest-outer-gap}, and the third from \eqref{eq:probest-inner-accurate}. By Lemma~\ref{lem:probest-inner-accurate} and \ref{lem:probest-outer-gap}, there exist $N_1, q \in \mathbb N$ such that \eqref{eq:probest-inner} holds with probability greater than $(1-\delta') (1 - \epsilon_1)^{N_2}$ for some $\delta' > 0$. Take 
	\[ \delta' \leq 1 - (1 - \delta_1) (1 - \epsilon_1)^{-N_2} \]
	then $R_{N,q}$ is an $\epsilon_1$-accurate inner approximation of $\mathcal R^\nu$ with probability greather than $1 - \delta_1$. 
	
	This concludes the proof of Theorem~\ref{thm:probest}. \hfill \QED

\section{Numerical Examples}
\label{sec:examples}
We present examples from optimization-based and neural network control. The closed-loop dynamics here are partially written
\begin{align*}
	x_{k+1} = A_\text{op} x_k + \psi(x_k)
\end{align*}
where $A_\text{op}$ denotes the open-loop linear part and $\psi$ is a nonlinear saturation or activation function. In the examples, $A_\text{op}$ is not necessarily stable but $\psi$ is differentiable around the origin, leading to the closed-loop dynamics \eqref{eq:system} with
\begin{gather*}
	A = A_\text{op} + J_0 \\
	\phi = \psi - J_0 x
\end{gather*}
where $J_0$ is the Jacobian of $\psi$ at the origin.

\subsection{Saturated LQR}
We consider the classical problem of an open-loop unstable linear system
\begin{align}
	\label{eq:example}
	A_\text{op} = \begin{bmatrix} 1.0745 & 0.1025 \\ 1.5079 & 1.0745 \end{bmatrix}, \quad\quad B = \begin{bmatrix} 0.1518 \\ 3.0741 \end{bmatrix}
\end{align}
under saturated LQR control (with $Q = I_2$ and $R = 1$)
\begin{subequations}
	\label{eq:example-lqr}
\begin{align}
	K &= \begin{bmatrix} -0.7999 & -0.3397 \end{bmatrix} \\
	\psi(x) &= B \operatorname{sat}(K x)
\end{align}
\end{subequations}
where $\operatorname{sat}(\cdot)$ is the scalar projection to the interval $[-1, 1]$. The closed-loop linear part $A = A_\text{op} + BK$ is stable and satisfies $|| A ||^p < 1$ for all $p \geq \tilde p = 5$. Furthermore, a lower bound $r_\iota = 1.5052$ with $\iota = 10^{-6}$ has been obtained for $\tilde r$ using the approach in Appendix~\ref{app:invariance}.

\sisetup{retain-unity-mantissa = false, scientific-notation = fixed, fixed-exponent = -2}

We want estimate the set $\mathcal R_{25}$ for \eqref{eq:example-lqr} with $c_{25} = 58.9031$. To that extent, we run Algorithms~\ref{alg:probest-inner} and \ref{alg:probest-outer} using $N_1 \in \{100, 250, 500, 1000 \}$ and $N_2 = 2 N_1$ samples, respectively, and a polynomial degree $2q = 4$. Based on a reliability of $\delta_1 = \delta_2 = \num[scientific-notation=false]{1e-6}$, these sample sizes correspond to accuracies $\epsilon_2$ ranging from \numrange{0.1093}{0.0109} for inner estimation (relative to $\mathcal R_{25}$) and $\epsilon_1$ from \numrange{0.7881}{0.0788} for outer estimation (relative to the inner sublevel set $\mathcal R^\nu$).
Recognizing the randomized nature of our approach, we have computed each estimate $R_{N,q}$ multiple times. Fig.~\ref{fig:scenarioopt-lqr} shows the estimates $R_{N,q}$ for the saturated LQR.

\begin{figure}
	\center
	
%
%
	\includegraphics{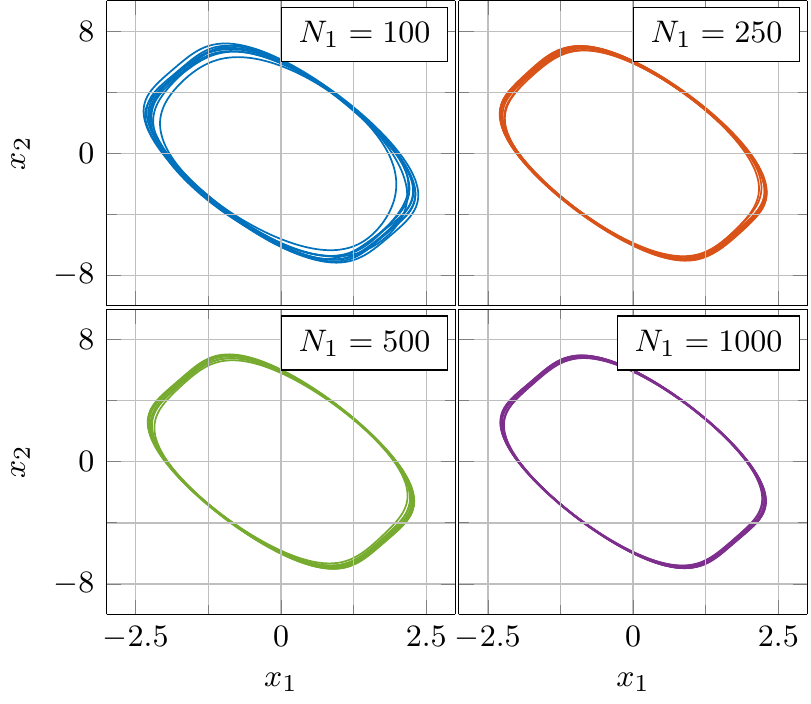}
	
	\caption{Randomized estimates $R_{N,q}$ of the set $\mathcal R_{25}$ for \eqref{eq:example}, \eqref{eq:example-lqr} with varying sample sizes and polynomial degree $2q = 4$.}
	\label{fig:scenarioopt-lqr}
\end{figure}

We evaluate $R_{N,q}$ empirically at $M = \num[scientific-notation=false]{10000}$ sample points $x \in \mathcal D$ for each estimate. We are interested in those samples that lie in $\mathcal R_{25}$ but not in $R_{N,q}$ (false-negative), and those that lie outside $\mathcal R_{25}$ but within $R_{N,q}$ (false-positive). 
For each sample size $N_2 = 2N_1$, the average and worst-case empirical probabilities (denoted by $\prob[M]{\cdot}$) are detailed in Tab.~\ref{tab:scenarioopt-lqr}.
The rate of false-positive test points decreases for larger sample sizes, in agreement with the theoretical results, and is well below the predicted accuracies in all cases. The rate of false-negative test points (relative to $\mathcal R_{25}$) shows more variation for smaller samples than for larger, which is also to be expected.

\begin{table}[h]
	\center
	
	\caption{Empirical accuracy of $R_{N,q}$ for \eqref{eq:example-lqr} with varying sample sizes and degree $2q = 4$ (results in \num{0.01}).}
	\label{tab:scenarioopt-lqr}
	
	\sisetup{table-format=2.2, table-omit-exponent}
	\begin{tabular}{| c | S S[table-space-text-pre={(}] | S S |}
		\hline
		 & \multicolumn{2}{S |}{{$\prob[M]{\mathcal R_p \setminus R_{N,q}$}}} & \multicolumn{2}{S |}{{$\prob[M]{R_{N,q} \setminus \mathcal R_p}$}} \\
		$N_1$ & {mean} & {max} & {mean} & {max} \\
		\hline
		$100$ & 0.0473 & 0.1405 & 0.0040 & 0.0168 \\
		$250$ & 0.0389 & 0.0690 & 0.0019 & 0.0048 \\
		$500$ & 0.0453 & 0.0892 & 0.0009 & 0.0031 \\
		$1000$ & 0.0441 & 0.0671 & 0.0004 & 0.0013 \\
		\hline
	\end{tabular}
\end{table}

\subsection{Suboptimal MPC}
We now turn our attention to a time-distributed optimal control scheme for the open-loop dynamics in \eqref{eq:example}. Let $z \in \mathbb R^3$ be a finite-horizon input sequence. The optimal MPC feedback is obtained by solving an optimal control problem parametrized in the initial condition $x$, namely, minimizing the finite-horizon ($T = 3$) quadratic cost 
\begin{align*}
	\langle P \xi_T, \xi_T \rangle + \sum_{k=0}^{T-1} \langle Q \xi_k, \xi_k \rangle + \langle R \mu_k, \mu_k \rangle
\end{align*}
where $\{\xi_k\}_{k=0}^T$ is a solution to $\xi_{k+1} = A_\text{op} \xi_k + B \mu_k$ for all $k \in \{0, \ldots, T\}$ with $\xi_0 = x$ under control inputs $\{ \mu_k \}_{k=0}^{T-1} \subset [-1, 1]$. The terminal weight $P$ is the solution to the discrete Riccati equation for $Q = I_2$ and $R = 1$.
Setting $z = (\mu_0, \ldots, \mu_{T-1})$ and eliminating the state sequence algebraically, we obtain the quadratic program
\begin{subequations}
	\label{eq:mpc-optimal}
\begin{align}
	\hat z &\in \operatorname*{arg\,min}_{z \in [-1, 1]^3} \langle H z, z \rangle + \langle 2 G x, z \rangle \\
	\hat u &= \begin{bmatrix} 1 & 0 & 0 \end{bmatrix} \, \hat z
\end{align}
\end{subequations}
with suitable matrices $G \in \mathbb R^{T \times 2}$ and $H \in \mathbb S_T$ (see \cite{Leung2021b} for details).

We employ a suboptimal solution to \eqref{eq:mpc-optimal} given by $r = 25$ iterations of the projected-gradient descent algorithm
\begin{subequations}
	\label{eq:mpc-suboptimal}
\begin{align}
	z_{i+1} &= \Pi( z_i - 2\alpha( H z_i + G x ) ) \\
	\psi(x,z) &= \begin{bmatrix} 1 & 0 & 0 \end{bmatrix} \, z_r \quad \text{if $z_0 = z$}
\end{align}
\end{subequations}
where $\Pi$ is the projection onto the input constraints. The closed-loop dynamics of \eqref{eq:example} and \eqref{eq:mpc-suboptimal} form a linear system with combined state $(x, z)$ under an extended nonlinear projection operator. We obtain $r_\iota = 0.6132$ for $\tilde p = 6$.

Choosing $z = 0$ as initial guess, we estimate the set $\mathcal R_{250}$ with $c_{250} = 94.3704$ restricted to the space of $x$. We run Algorithms~\ref{alg:probest-inner} and \ref{alg:probest-outer} using $N_1 = 1500$ and $N_2 = 3000$ samples, respectively, and polynomial degrees $2 q \in \{ 2, 4, 6, 8 \}$. Based on a reliability of $\delta_1 = \delta_2 = \num[scientific-notation=false]{1e-6}$, these sample sizes correspond to an accuracy of $\epsilon_2 = \num{0.0073}$ for inner estimation and accuracies $\epsilon_1 \in [\num{0.0241}, \num{0.2519}]$, depending on $n_q$, for outer estimation (relative to the inner sublevel sets $\mathcal R^\nu$). Fig.~\ref{fig:scenarioopt-mpc} shows the estimates $R_{N,q}$ for the suboptimal MPC.

\begin{figure}
	\center
	
%
%
	\includegraphics{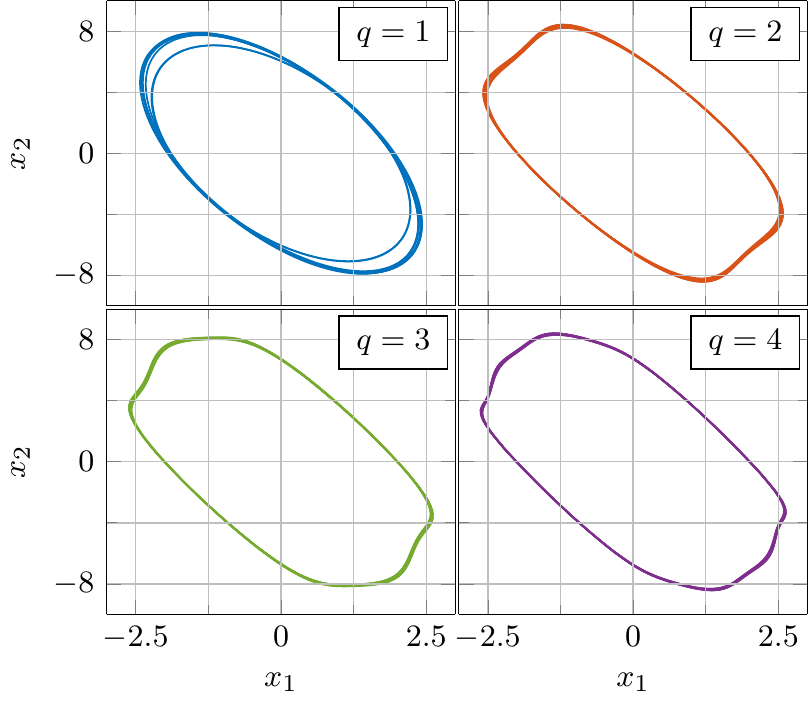}
	
	\caption{Randomized estimates $R_{N,q}$ of the set $\mathcal R_{250}$ for \eqref{eq:example}, \eqref{eq:mpc-suboptimal} with $N_1 = 1500$, $N_2 = 3000$, and varying degree $2q$.}
	\label{fig:scenarioopt-mpc}
\end{figure}

\begin{table}[h]
	\center
	
	\caption{Empirical accuracy of $R_{N,q}$ for \eqref{eq:example}, \eqref{eq:mpc-suboptimal} with $N_1 = 1500$, $N_2 = 3000$, and varying degree $2q$ (results in \num{0.01}).}
	\label{tab:scenarioopt-mpc}
	
	\sisetup{table-format=2.2, table-omit-exponent,  table-auto-round}
	\begin{tabular}{| c | S S | S S |}
		\hline
		 & \multicolumn{2}{S |}{{$\prob[M]{\mathcal R_p \setminus R_{N,q}$}}} & \multicolumn{2}{S |}{{$\prob[M]{R_{N,q} \setminus \mathcal R_p}$}} \\
		degree & {mean} & {max} & {mean} & {max} \\
		\hline
		$q = 1$ & 0.1185 & 0.1920 & 0.00044 & 0.00184 \\
		$q = 2$ & 0.0576 & 0.0726 & 0.00015 & 0.00047 \\
		$q = 3$ & 0.0327 & 0.0412 & 0.00020 & 0.00057 \\
		$q = 4$ & 0.0160 & 0.0227 & 0.00029 & 0.00056 \\
		\hline
	\end{tabular}
\end{table}

For each degree $q$, the average and worst-case empirical probabilities are detailed in Tab.~\ref{tab:scenarioopt-mpc}. It is shown that the rate of false-positive samples remains well below the accuracy of the inner estimation predicted by Theorem~\ref{thm:probest}. The rate of false-negative samples (relative to $\mathcal R_p$) appears to decrease with increasing polynomial degrees, even though $N_1$ was constant. It is important to note that the apparent fluctuation in the false-positive rate does not contradict our theoretical results but can be explained by reduced conservatism of the estimates $R_{N,q}$.

\section{Conclusion}
Data-driven stability estimates previously lacked probabilistic guarantees, had sample complexities scaling badly for larger systems, or provided only conservative approximations. Combining empirical optimization with ideas from converse Lyapunov theory, we have proposed an hierarchical, data-driven region-of-attraction estimation. While at each level the polynomial estimate is a probabilistic inner approximation of some desired accuracy, with sample complexity independent of the number of states or polynomial degree, we have proven that the estimates also converge in probability to be outer approximations. Thus, our approach is both accurate and non-conservative.

\appendices
\section{}
\label{app:invariance}
We propose a simple but efficient method for a lower bound on $\tilde r$ guaranteeing that $\mathscr B(\tilde r)$ is $\tilde p$-invariant. To that extent, write $\phi$ as
\begin{align}
	\phi = B_1 \circ \phi_1 \circ K_1 + \cdots + B_m \circ \phi_m \circ K_m
\end{align}
with $m \in \mathbb N$, where $\phi_1, \ldots, \phi_m: \mathbb R \to \mathbb R$ are scalar nonlinear functions and $\{ (B_i, K_i^{\mathrm T}) \in \mathbb R^n \times \mathbb R^n \}_{i=1}^m$ are pairs of linear operators. We make the following assumptions.

\begin{assumption}
	$\phi_1, \ldots, \phi_m$ are monotonic functions satisfying $| \phi_i(w) | \leq \phi_i(|w|)$ for all $w \in \mathbb R$ and $i \in \{1, \ldots, m\}$.
\end{assumption}

Let $\{x_k\}_{k \geq 0}$ be a solution of $\Pi(x_0)$ for an arbitrary initial condition $x_0 \in \mathbb R^n$ and define $w_{p,i} = K_i x_p$ for all $p \geq 0$ and $i \in \{1, \ldots, m\}$; 
we bound the state norm after $p \geq 0$ steps as
\begin{align}
	\label{eq:invariance-bound}
	\| x_p \|_2 \leq \| A^p \| \, r + \sum_{p'=0}^{p-1} \sum_{i=1}^m || A^{p-p'-1} B_i || \, \phi_i( | w_{p',i} | )
\end{align}
if $r = \| x_0 \|_2$,
where the $i$-th output after any $p' < p$ steps satisfies the recursive bound
\begin{align*}
	| w_{p',i} | \leq || K_i A^{p'} || \, r + \sum_{p''=0}^{p'-1} \sum_{i'=1}^m | K_j A^{p'-p''-1} B_i | \, \phi_i( | w_{p'',i'} | )
\end{align*}
and in particular, $| w_{0,i} | \leq || K_i || \, r$, for all $i \in \{1, \ldots, m\}$.
In other words, there exists a monotonic scalar bound
\begin{align}
	\| x_p \|_2 \leq F_p(r)
\end{align}
for all $x_0 \in \mathscr B(r)$.

A sequence of lower bounds $r_\iota < \tilde r$ for $\tilde p > 0$ can then be found as solutions to the family of optimization problems
\begin{align*}
	r_\iota = \max \{ r \geq 0 \, | \, F_p(r) \leq r - \iota, \; p \in \{ \tilde p, \ldots, 2\tilde p - 1 \} \} 
\end{align*}
for $\iota > 0$,
satisfying $\mathscr B(r_\iota) \subset \mathscr B(\tilde r)$.

\begin{proposition}
	\label{prop:invariance-bound}
	Let $\iota > 0$ be small; if $\| A^p \| < 1$ for all $p \geq \tilde p$ and $\phi_i(| w |) | \leq | w |^\alpha$ on $w \in \mathbb R$ for all $i \in \{ 1, \ldots, m \}$ and some $\alpha > 1$, then $r_\iota$ exists and satisfies $r_\iota > 0$.
\end{proposition}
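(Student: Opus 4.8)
The plan is to analyze the scalar bound $F_p$ near the origin and exploit the spectral gap $1 - \|A^p\| > 0$ available for $p \ge \tilde p$. First I would establish the leading-order structure of $F_p$. Substituting the recursive bounds for the outputs $|w_{p',i}|$ into \eqref{eq:invariance-bound} and using the hypothesis $\phi_i(|w|) \le |w|^\alpha$ with $\alpha > 1$, every nonlinear contribution is (a composition of) $\phi_i$'s applied to quantities whose lowest-order term in $r$ is linear, since $|w_{0,i}| \le \|K_i\|\, r$. Tracking lowest-order terms by induction on the step index $p'$, each $\phi_i(|w_{p',i}|)$ is of order $r^\alpha$ or higher as $r \to 0^+$, so that the finite sum defining $F_p$ separates into its linear part and a superlinear remainder,
\[
	F_p(r) = \|A^p\|\, r + h_p(r), \qquad h_p(r) \ge 0, \quad \lim_{r \to 0^+} \frac{h_p(r)}{r} = 0 .
\]
The essential point is that $\alpha > 1$ forces $h_p(r)/r = O(r^{\alpha-1}) \to 0$, while $h_p \ge 0$ because every term in $F_p$ is nonnegative (by the assumption $\phi_i(|w|) \ge |\phi_i(w)| \ge 0$).

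Next I would define the margin $\Delta_p(r) = r - F_p(r) = (1 - \|A^p\|)\, r - h_p(r)$. Since $\|A^p\| < 1$ for $p \ge \tilde p$ and the index set $\{\tilde p, \ldots, 2\tilde p - 1\}$ is finite, I set $\rho = \min_p (1 - \|A^p\|) > 0$. By the structure above, $\lim_{r \to 0^+} \Delta_p(r)/r = 1 - \|A^p\| \ge \rho$ for each $p$; hence there exists $r^* > 0$ with $\Delta_p(r) \ge \tfrac{\rho}{2}\, r$ for all $r \in (0, r^*]$ and all $p$ in the index set simultaneously.

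Given this uniform lower bound, the positivity claim follows by exhibiting an explicit feasible point. For any $\iota$ small enough that $2\iota/\rho \le r^*$, the point $\bar r = 2\iota/\rho > 0$ satisfies $\Delta_p(\bar r) \ge \tfrac{\rho}{2}\,\bar r = \iota$ for every $p$, i.e.\ $F_p(\bar r) \le \bar r - \iota$. Thus the feasible set of the optimization defining $r_\iota$ is nonempty; it is closed by continuity of each $F_p$, so its supremum is attained (as a maximum over the compact domain $\mathcal D$ on which trajectories are sampled), whence $r_\iota$ exists and $r_\iota \ge \bar r > 0$.

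I expect the main obstacle to be the first step: controlling the nested recursion for $|w_{p',i}|$ and verifying rigorously that no nonlinear term ever contributes at order $r$. The bookkeeping — that each composition of $\phi_i$ only raises the order (because it introduces a factor with exponent $\alpha > 1$), that products and sums of such terms remain superlinear, and that only finitely many are combined — is routine but must be made precise by the induction on $p'$ sketched above, so that the clean decomposition of $F_p$ and the limit $h_p(r)/r \to 0$ are justified.
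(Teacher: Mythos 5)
Your proof is correct and follows essentially the same route as the paper: both decompose $F_p$ into a linear part with slope $\|A^p\|<1$ plus a superlinear ($O(r^\alpha)$) remainder coming from the hypothesis on $\phi_i$, deduce $F_p(r) \le r - \iota$ for some small $r>0$ once $\iota$ is small, and conclude that the feasible set is nonempty so $r_\iota > 0$. Your treatment of the nested recursion for $|w_{p',i}|$ is merely a more careful spelling-out of the paper's one-line claim that $F_p(r) \le \gamma r + \beta|r|^\alpha$ with $\gamma < 1$.
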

\begin{proof}
	Replacing $\phi_i(\cdot)$ in \eqref{eq:invariance-bound}, we obtain
	\begin{align*}
		F_p(r) \leq \gamma r + \beta | r |^\alpha
	\end{align*}
	with $\gamma, \beta > 0$ for all $p \in \{ \tilde p, \ldots, 2\tilde p-1\}$; moreover, if $\| A^p \| < 1$ for all $p \geq \tilde p$, we can choose $\gamma < 1$. If $\alpha > 1$, then there exists $r^* > 0$ such that $\gamma r + \beta | r |^\alpha < r$ for all $r \in (0, r^*)$ and hence, if $\iota$ is sufficiently small, $F_p(r') \leq r' - \iota$ for some $r' \in (0, r^*)$. In other words, $r_\iota$ is the maximum of a nonempty set and $r_\iota \geq r'$, the desired result.
\end{proof}

The sufficient condition in Proposition~\ref{prop:invariance-bound} are satisfied for any stable matrix $A$ (compare \cite[Proposition~3.1]{Balint2006} and functions such as $w \mapsto w - \operatorname{sat}(w)$ or $w \mapsto w - \tanh(w)$, which are obtained in the analysis of optimization algorithms and neural network control, respectively.

\section{}
The following, auxiliary results are used to prove uniqueness of the polynomial approximation.

\subsection{Spanning the normal cone}
\label{app:conespan}
Let the convex set $K \subset \mathbb R^{n_\xi}$ satisfy
\begin{align*}
	K \subseteq \{ \xi \in \mathbb R^{n_\xi} \, | \, A \xi \leq b \}
\end{align*}
for some $A \in \mathbb R^{m \times n_\xi}$ and $b \in \mathbb R^m$. The normal cone of $K$ is defined as
\begin{align}
	N_K(\bar \xi) = \{ y \in \mathbb R^{n_\xi} \, | \, \text{$\langle y, \bar \xi - \xi \rangle \geq 0$ for all $\xi \in K$} \}
\end{align}
for any point $\bar \xi \in K$.

\begin{lemma}
	Let $\bar \xi \in K$ satisfy $A \bar \xi = b$ and $y \in \mathbb R^{n_\xi}$; there exist $\lambda \in \mathbb R_{\geq 0}^{m}$ such that $y = A^{\mathrm T} \lambda$ if and only if $y \in N_K(\bar \xi)$.
\end{lemma}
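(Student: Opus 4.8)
The statement to prove is a Farkas-type equivalence: for a convex set $K \subseteq \{\xi \mid A\xi \le b\}$ and a point $\bar\xi \in K$ lying on the boundary where $A\bar\xi = b$ (all constraints active), a vector $y$ lies in the normal cone $N_K(\bar\xi)$ if and only if $y = A^{\mathrm T}\lambda$ for some $\lambda \ge 0$. The plan is to prove the two directions separately, with the easy ``if'' direction establishing that nonnegative combinations of constraint normals are always normal vectors, and the harder ``only if'' direction requiring a separating-hyperplane or Farkas argument.

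\emph{The easy direction ($\Leftarrow$).} Suppose $y = A^{\mathrm T}\lambda$ with $\lambda \ge 0$. For any $\xi \in K$ we have $A\xi \le b = A\bar\xi$, so $A(\bar\xi - \xi) \ge 0$ element-wise. Then
\begin{align*}
	\langle y, \bar\xi - \xi \rangle = \langle A^{\mathrm T}\lambda, \bar\xi - \xi \rangle = \langle \lambda, A(\bar\xi - \xi) \rangle \ge 0,
\end{align*}
since $\lambda \ge 0$ and $A(\bar\xi - \xi) \ge 0$. By definition $y \in N_K(\bar\xi)$. This direction is routine and uses nothing beyond the sign conditions.

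\emph{The hard direction ($\Rightarrow$).} This is the genuine content and where I expect the main obstacle. Suppose $y \in N_K(\bar\xi)$ but, for contradiction, that $y$ is \emph{not} expressible as $A^{\mathrm T}\lambda$ with $\lambda \ge 0$. The cone $C = \{A^{\mathrm T}\lambda \mid \lambda \ge 0\}$ is a finitely generated (hence closed, by a Minkowski--Weyl/Carath\'eodory argument) convex cone, so by the separating hyperplane theorem there exists a direction $d \in \mathbb R^{n_\xi}$ with $\langle y, d\rangle > 0$ while $\langle A^{\mathrm T}\lambda, d\rangle \le 0$ for all $\lambda \ge 0$; the latter forces $Ad \le 0$. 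The idea is then to move from $\bar\xi$ in the direction $d$: because $A\bar\xi = b$ and $Ad \le 0$, the point $\bar\xi + t d$ satisfies $A(\bar\xi + td) \le b$ for all $t \ge 0$. The obstacle is that $K$ need not equal the full polyhedron $\{\xi \mid A\xi \le b\}$, so feasibility of $\bar\xi + td$ for the inequalities does not immediately place it in $K$. I would resolve this by using the hypothesis that $\bar\xi$ satisfies \emph{all} the listed constraints with equality together with the convexity of $K$ --- specifically, one needs that $K$ locally fills out the polyhedral cone described by the active constraints near $\bar\xi$, which is precisely the setting in which this lemma is invoked (the feasible set $K_2$ of the linear program in Proposition~\ref{prop:probest-inner-unique}, where $K$ is itself the polyhedron). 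Assuming $\bar\xi + td \in K$ for small $t > 0$, the normal-cone inequality gives $\langle y, \bar\xi - (\bar\xi + td)\rangle = -t\langle y, d\rangle \ge 0$, i.e. $\langle y, d\rangle \le 0$, contradicting $\langle y, d\rangle > 0$. Hence no such $d$ exists and $y \in C$, completing the proof.

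The crux, and the step I would scrutinize most carefully, is the closedness of the finitely generated cone $C$ (needed so the separation theorem yields a strict separator) and the identification of $K$ with the polyhedron near $\bar\xi$. If $K$ is taken to be the polyhedron $\{\xi \mid A\xi \le b\}$ itself --- as it is in the application --- these concerns evaporate and the argument is a clean instance of Farkas' lemma; the lemma as stated for a general convex subset $K$ is then simply the specialization obtained by noting that the normal cone of $K$ at an all-active boundary point coincides with the normal cone of its defining polyhedron.
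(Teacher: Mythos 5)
Your proof takes essentially the same route as the paper's: the forward direction is the identical sign computation, and the reverse direction is Farkas' lemma (your separating-hyperplane argument for the closed finitely generated cone $C=\{A^{\mathrm T}\lambda \mid \lambda\ge 0\}$ is just the standard proof of Farkas, which the paper invokes directly). The obstacle you flag --- that the separating direction only yields a point of the polyhedron $\{\xi \mid A\xi\le b\}$, not necessarily of $K$, so the ``only if'' direction can fail for a proper convex subset $K$ (e.g.\ $K=\{\bar\xi\}$) --- is real and is in fact left unaddressed in the paper's own proof, which concludes $y\notin N_K(\bar\xi)$ from the point $\bar\xi-\nu$ after only verifying $A(\bar\xi-\nu)\le b$; as you note, the issue disappears in the application, where $K$ is the polyhedron itself.
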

\begin{proof}
	Take $\lambda \geq 0$, then $\langle A^{\mathrm T} \lambda, \bar \xi - \xi \rangle = \langle \lambda, b - A \xi \rangle \geq 0$ for any $\xi \in \mathbb R^{n_\xi}$ satisfying $A \xi \leq b$ (i.e., for any $\xi \in K$) and hence, $A^{\mathrm T} \lambda \in N_K(\bar \xi)$. To see the opposite direction, assume that no such $\lambda \geq 0$ exists for $y \in \mathbb R^{n_\xi}$; by Farkas' lemma, then $A \nu \geq 0$ and $\langle y, \nu \rangle < 0$. Since $A (\bar \xi - \nu) \leq b$ we have that $y \not\in N_K(\bar \xi)$, the desired result.
\end{proof}

Note that, in the main part, the inequalities were flipped for both $K_2$ and $N^-_K(\bar \xi)$, thus recovering the result.

\subsection{Linear independence of monomials}
\label{app:vandermonde}
Let $x^{(1)}, \ldots, x^{(N)}$ be $N \in \mathbb N$ samples of a random variable $x \in \mathbb R^n$ and define the matrix
\begin{align*}
	S_q(x) = \begin{bmatrix} Z_q\{x^{(1)}\} & \cdots & Z_q\{x^{(N)}\} \end{bmatrix} \in \mathbb R^{n_q \times N}
\end{align*}
where $Z_q\{x\} \in \mathbb R^{n_q}$ denotes the vector of monomials of $x$ (starting with $x^0 \equiv 1$) up to degree $q \in \mathbb N$. We assume without loss of generality that $N = n_q$. The following statements are equivalent \cite[Theorem~4]{Olver2006}:
\begin{enumerate}
	\item The matrix $S_q(x)$ has full rank;
	\item The points $x^{(1)}, \ldots, x^{(N)}$ do not belong to a common algebraic hypersurface $\langle \psi, Z_q\{x\} \rangle = 0$ with $\psi \neq 0$;
	\item The interpolation $S_q(x)^{\mathrm T} \theta = v$ has a unique solution for any $v \in \mathbb R^N$.
\end{enumerate}
Note that these statements are equivalent to the null space of $S_q(x)$ being equal to $\{0\}$. They also hold independently of the actual elements or ordering of $Z_q\{x\}$. This allows to prove the following result by induction.

\begin{proposition}
	\label{prop:vandermonde}
	The matrix $S_q(x)$ has almost surely full rank.
\end{proposition}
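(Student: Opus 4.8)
The plan is to prove the statement by induction on the number of columns of $S_q(x)$, showing that the first $k$ monomial vectors $Z_q\{x^{(1)}\}, \ldots, Z_q\{x^{(k)}\}$ are linearly independent almost surely for each $k$ from $1$ up to $N = n_q$. The base case $k = 1$ is immediate: the vector $Z_q\{x^{(1)}\}$ has first entry $1$ (the monomial $x^0 \equiv 1$) and is therefore nonzero, so it has rank one surely.

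For the inductive step, I would condition on the event that the first $k$ columns (with $k < n_q$) are linearly independent, which has probability one by the induction hypothesis. On this event their span $V = \operatorname{span}\{Z_q\{x^{(1)}\}, \ldots, Z_q\{x^{(k)}\}\}$ is a $k$-dimensional, hence proper, subspace of $\mathbb R^{n_q}$, so its orthogonal complement contains some nonzero vector $\psi$. The newly drawn sample $x^{(k+1)}$ enlarges the rank unless $Z_q\{x^{(k+1)}\} \in V$, which in particular forces $\langle \psi, Z_q\{x^{(k+1)}\} \rangle = 0$; that is, $x^{(k+1)}$ lies on the algebraic hypersurface defined by $\psi$, matching condition (2) of the stated equivalence.

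The crucial observation is that this hypersurface is proper: because the monomials up to degree $q$ are linearly independent as functions and $\psi \neq 0$, the polynomial $y \mapsto \langle \psi, Z_q\{y\} \rangle$ does not vanish identically, so its zero set has Lebesgue measure zero in $\mathbb R^n$. Since $x^{(k+1)}$ is drawn independently of $x^{(1)}, \ldots, x^{(k)}$ and the sampling distribution assigns zero probability to any proper algebraic hypersurface, the conditional probability that $Z_q\{x^{(k+1)}\} \in V$ is zero. Integrating over the first $k$ samples via the tower property then shows that the first $k+1$ columns are linearly independent with probability one.

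Iterating up to $k = n_q$ yields that all $N = n_q$ columns of $S_q(x)$ are almost surely linearly independent, i.e., $S_q(x)$ has almost surely full rank, which by the equivalence of (1) and (2) is the desired conclusion. The main obstacle I anticipate is the measure-theoretic bookkeeping in the inductive step: making the dependence of $\psi$ (hence of the null hypersurface) on the first $k$ realizations measurable so that the tower property applies cleanly, and pinning down the hypothesis on the sampling distribution (absolute continuity, or more weakly non-concentration on proper algebraic hypersurfaces) that is needed to turn ``measure zero'' into ``probability zero.''
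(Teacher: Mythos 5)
Your proof is correct and follows essentially the same route as the paper: an induction in which each new sample must avoid a proper algebraic hypersurface determined by the previous ones, an event of probability zero. The only cosmetic difference is that the paper inducts on the top-left $k \times k$ blocks of $S_q(x)$ (truncated monomial vectors) while you work with the span of the full columns; your explicit remark that the sampling distribution must assign zero probability to proper algebraic hypersurfaces is a point the paper leaves implicit.
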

\begin{proof}
	We show that any top-left $k \times k$ block $S^{[k]}$ of $S_q(x)$ with $k \in \{1, \ldots, n_q\}$ has full rank with probability one: Clearly, this is true for $S^{[1]} = 1$.
	
	Assume that $S^{[k]}$ has full rank; consequently, the top-left $(k+1) \times k$ block\footnote{This corresponds to the first $k+1$ elements of $Z_q\{x^{(i)}\}$ for the first $k$ samples $x^{(i)}$.} $S^{[k]}{}'$ of $S_q(x)$ has a null space spanned by $\psi \in \mathbb R^{k+1} - \{ 0 \}$. In other words, the samples $x^{(1)}, \ldots, x^{(k)}$ belong to the unique algebraic hypersurface $\langle (\psi, 0), Z_q\{x\} \rangle = 0$. 
	However, the probability that $x^{(k+1)}$ belongs to the same hypersurface is zero and hence, $S^{[k+1]}$ has full rank with probability one. Repeating this argument until $k = n_q$ 
	completes the proof.
\end{proof}

Since there are finitely many permutations of $N$ samples, we conclude that any combination of up to $n_q$ sampled vectors of monomials is linearly independent with probability one if $N > n_q$.

\section*{Acknowledgment}
The author remains thankful of Dominic Liao-McPherson for comments and discussions at various stages of the manuscript.

\bibliographystyle{IEEEtran}
\bibliography{../../../_bib/library}

\begin{thebibliography}{10}
\providecommand{\url}[1]{#1}
\csname url@samestyle\endcsname
\providecommand{\newblock}{\relax}
\providecommand{\bibinfo}[2]{#2}
\providecommand{\BIBentrySTDinterwordspacing}{\spaceskip=0pt\relax}
\providecommand{\BIBentryALTinterwordstretchfactor}{4}
\providecommand{\BIBentryALTinterwordspacing}{\spaceskip=\fontdimen2\font plus
\BIBentryALTinterwordstretchfactor\fontdimen3\font minus
  \fontdimen4\font\relax}
\providecommand{\BIBforeignlanguage}[2]{{%
\expandafter\ifx\csname l@#1\endcsname\relax
\typeout{** WARNING: IEEEtran.bst: No hyphenation pattern has been}%
\typeout{** loaded for the language `#1'. Using the pattern for}%
\typeout{** the default language instead.}%
\else
\language=\csname l@#1\endcsname
\fi
#2}}
\providecommand{\BIBdecl}{\relax}
\BIBdecl

\bibitem{genesioEtAl1985}
R.~Genesio, M.~Tartaglia, and A.~Vicino, ``{On the Estimation of Asymptotic
  Stability Regions: State of the Art and New Proposals},'' \emph{IEEE
  Transactions on Automatic Control}, vol.~30, no.~8, pp. 747--755, 1985.

\bibitem{Cunis2021aut}
T.~Cunis and I.~Kolmanovsky, ``{Viability, viscosity, and storage functions in
  model-predictive control with terminal constraints},'' \emph{Automatica},
  vol. 131, p. 109748, 2021.

\bibitem{Skibik2022tac}
\BIBentryALTinterwordspacing
T.~Skibik, D.~Liao-McPherson, T.~Cunis, I.~V. Kolmanovsky, and M.~M. Nicotra,
  ``{A Feasibility Governor for Enlarging the Region of Attraction of Linear
  Model Predictive Controllers},'' \emph{IEEE Transactions on Automatic
  Control}, vol.~67, no.~10, pp. 5501--5508, 2022. [Online]. Available:
  \url{http://arxiv.org/abs/2011.01924}
\BIBentrySTDinterwordspacing

\bibitem{Leung2021b}
J.~Leung, D.~Liao-McPherson, and I.~V. Kolmanovsky, ``{A Computable
  Plant-Optimizer Region of Attraction Estimate for Time-distributed Linear
  Model Predictive Control},'' in \emph{Proceedings of the American Control
  Conference}, 2021, pp. 3384--3391.

\bibitem{Hashemi2020}
\BIBentryALTinterwordspacing
N.~Hashemi, J.~Ruths, and M.~Fazlyab, ``{Certifying Incremental Quadratic
  Constraints for Neural Networks via Convex Optimization},'' \emph{arXiv}, no.
  2012.05981, 2020. [Online]. Available: \url{http://arxiv.org/abs/2012.05981}
\BIBentrySTDinterwordspacing

\bibitem{Yin2021a}
H.~Yin, P.~Seiler, and M.~Arcak, ``{Stability Analysis using Quadratic
  Constraints for Systems with Neural Network Controllers},'' \emph{IEEE
  Transactions on Automatic Control}, vol. 9286, no.~1, 2021.

\bibitem{Fazlyab2020}
M.~Fazlyab, M.~Morari, and G.~J. Pappas, ``{Safety Verification and Robustness
  Analysis of Neural Networks via Quadratic Constraints and Semidefinite
  Programming},'' \emph{IEEE Transactions on Automatic Control}, 2020.

\bibitem{topcuEtAl2008}
U.~Topcu, A.~Packard, and P.~Seiler, ``{Local stability analysis using
  simulations and sum-of-squares programming},'' \emph{Automatica}, vol.~44,
  no.~10, pp. 2669--2675, 2008.

\bibitem{chakrabortyEtAl2011}
A.~Chakraborty, P.~Seiler, and G.~J. Balas, ``{Nonlinear region of attraction
  analysis for flight control verification and validation},'' \emph{Control
  Engineering Practice}, vol.~19, no.~4, pp. 335--345, 2011.

\bibitem{yinEtAl2019}
H.~Yin, A.~Packard, M.~Arcak, and P.~Seiler, ``{Finite horizon backward
  reachability analysis and control synthesis for uncertain nonlinear
  systems},'' in \emph{Proceedings of the American Control Conference},
  Philadelphia, US-PA, 2019, pp. 5020--5026.

\bibitem{Colbert2018}
B.~K. Colbert and M.~M. Peet, ``{Using Trajectory Measurements to Estimate the
  Region of Attraction of Nonlinear Systems},'' in \emph{Proceedings of the
  IEEE Conference on Decision and Control}.\hskip 1em plus 0.5em minus
  0.4em\relax IEEE, 2018, pp. 2341--2347.

\bibitem{Lai2021scitech}
B.~Lai, T.~Cunis, and L.~Burlion, ``{Nonlinear Trajectory Based Region of
  Attraction Estimation for Aircraft Dynamics Analysis},'' in \emph{AIAA
  Scitech Forum 2021}, Virtual, 2021.

\bibitem{Tempo2013}
R.~Tempo, G.~Calafiore, and F.~Dabbene, \emph{{Randomized Algorithms for
  Analysis and Control of Uncertain Systems}}, 2nd~ed.\hskip 1em plus 0.5em
  minus 0.4em\relax London: Springer, 2013.

\bibitem{Devonport2020}
A.~Devonport and M.~Arcak, ``{Estimating Reachable Sets with Scenario
  Optimization},'' in \emph{Proceedings of the 2nd Conference on Learning for
  Dynamics and Control}, ser. Proceedings of Machine Learning Research, no.
  120, The Cloud, 2020, pp. 75--84.

\bibitem{Korda2020}
M.~Korda, ``{Computing Controlled Invariant Sets from Data Using Convex
  Optimization},'' \emph{SIAM Journal on Control and Optimization}, vol.~58,
  no.~5, pp. 2871--2899, 2020.

\bibitem{Shen2022}
\BIBentryALTinterwordspacing
Y.~Shen, M.~Bichuch, and E.~Mallada, ``{Model-free Learning of Regions of
  Attraction via Recurrent Sets},'' pp. 1--9, 2022. [Online]. Available:
  \url{http://arxiv.org/abs/2204.10372}
\BIBentrySTDinterwordspacing

\bibitem{Balint2006}
S.~T. Balint, E.~Kaslik, A.~M. Balint, and A.~Grigis, ``{Methods for
  determination and approximation of the domain of attraction in the case of
  autonomous discrete dynamical systems},'' \emph{Advances in Difference
  Equations}, vol. 2006, no. 23939, pp. 1--15, 2006.

\bibitem{Kalur2022}
A.~Kalur, T.~Mushtaq, P.~Seiler, and M.~S. Hemati, ``{Estimating Regions of
  Attraction for Transitional Flows Using Quadratic Constraints},'' \emph{IEEE
  Control Systems Letters}, vol.~6, no.~2, pp. 482--487, 2022.

\bibitem{khodadadiEtAl2014}
L.~Khodadadi, B.~Samadi, and H.~Khaloozadeh, ``{Estimation of region of
  attraction for polynomial nonlinear systems: A numerical method},'' \emph{ISA
  Transactions}, vol.~53, no.~1, pp. 25--32, 2014.

\bibitem{jonesEtAl2017}
M.~Jones, H.~Mohammadi, and M.~M. Peet, ``{Estimating the region of attraction
  using polynomial optimization: A converse Lyapunov result},'' in \emph{56th
  IEEE Conference on Decision and Control}, Melbourne, AU, 2017, pp.
  1796--1802.

\bibitem{Ortega1973}
J.~M. Ortega, ``{Stability of Difference Equations and Convergence of Iterative
  Processes},'' \emph{SIAM Journal on Numerical Analysis}, vol.~10, no.~2, pp.
  268--282, 1973.

\bibitem{Jiang2002}
Z.~P. Jiang and Y.~Wang, ``{A converse Lyapunov theorem for discrete-time
  systems with disturbances},'' \emph{Systems and Control Letters}, vol.~45,
  no.~1, pp. 49--58, 2002.

\bibitem{Zeng2009}
Z.~Zeng, ``{Converse Lyapunov Theorems for Nonautonomous Discrete-Time
  Systems},'' \emph{Journal of Mathematical Sciences}, vol. 161, no.~2, pp.
  337--343, 2009.

\bibitem{Geiselhart2014}
R.~Geiselhart, R.~H. Gielen, M.~Lazar, and F.~R. Wirth, ``{An alternative
  converse Lyapunov theorem for discrete-time systems},'' \emph{Systems and
  Control Letters}, vol.~70, pp. 49--59, 2014.

\bibitem{blum1998}
L.~Blum, F.~Cucker, M.~Shub, and S.~Smale, \emph{{Complexity and Real
  Computation}}.\hskip 1em plus 0.5em minus 0.4em\relax New York, NY: Springer,
  1998.

\bibitem{Cheney2000}
W.~Cheney and W.~Light, \emph{{A Course in Approximation Theory}}, ser.
  Graduate Studies in Mathematics.\hskip 1em plus 0.5em minus 0.4em\relax
  Providence, Rhode Island: American Mathematical Society, 2000, vol. 101.

\bibitem{Mangasarian1979}
O.~L. Mangasarian, ``{Uniqueness of solution in linear programming},''
  \emph{Linear Algebra and Its Applications}, vol.~25, no.~C, pp. 151--162,
  1979.

\bibitem{Olver2006}
P.~J. Olver, ``{On multivariate interpolation},'' \emph{Studies in Applied
  Mathematics}, vol. 116, no.~2, pp. 201--240, 2006.

\end{thebibliography}

%
%

\end{document}